\newcommand{\AAA}{{\cal A}}
\newcommand{\BB}{{\cal B}}
\newcommand{\DD}{{\cal D}}
\newcommand{\EE}{{\cal E}}
\newcommand{\FF}{{\cal F}}
\newcommand{\HH}{{\cal H}}
\newcommand{\LL}{{\cal L}}
\newcommand{\MM}{{\cal M}}
\newcommand{\VV}{{\cal V}}
\newcommand{\WW}{{\cal W}}
\newcommand{\BN}{{\mathbb N}}
\newcommand{\BR}{{\mathbb R}}
\newcommand{\BX}{{\mathbb X}}
\newcommand{\BBM}{{\mathbf M}}
\newcommand{\BBX}{{\mathbf X}}
\newcommand{\fch}{{\mathbf{1}}}
\newcommand{\dist}{\mbox{\rm dist}}
\newtheorem{theorem}{\bf Theorem}[section]
\newtheorem{proposition}[theorem]{\bf Proposition}
\theoremstyle{definition}
\newtheorem{definition}[theorem]{Definition}
\newtheorem{example}[theorem]{\bf Example}
\newtheorem{remark}[theorem]{Remark}
\numberwithin{equation}{section}
\begin{document}

\title {Systems of semilinear parabolic variational inequalities with
time-dependent convex obstacles}
\author {Tomasz Klimsiak, Andrzej Rozkosz and Leszek S\l omi\'nski}
\date{}
\maketitle
\begin{abstract}
We consider a system of seminlinear parabolic variational
inequalities with time-dependent convex obstacles. We prove the
existence and   uniqueness  of its solution. We also provide a
stochastic representation of the solution and show that it can be
approximated by the penalization method. Our proofs are based upon
probabilistic methods from the theory of Markov processes and the
theory of backward stochastic differential equations.
\end{abstract}

\noindent{\bf Keywords} Semilinear variational inequality,
divergence form operator, time-dependent obstacle, penalization
method,  reflected backward stochastic differential equation.
\medskip\\
{\bf Mathematics Subject Classification (2000)} Primary: 35K87,
Secondary: 60H30.

\footnotetext{T. Klimsiak: Institute of Mathematics, Polish
Academy of Sciences, \'Sniadeckich 8, 00-956 Warszawa, Poland, and
Faculty of Mathematics and Computer Science, Nicolaus Copernicus
University, Chopina 12/18, 87-100 Toru\'n, Poland. e-mail:
tomas@mat.umk.pl}

\footnotetext{A. Rozkosz and L. S\l omi\'nski: Faculty of
Mathematics and Computer Science, Nicolaus Copernicus University,
Chopina 12/18, 87-100 Toru\'n, Poland. e-mails: rozkosz@mat.umk.pl
(A. Rozkosz), leszeks@mat.umk.pl (L. S\l omi\'nski).}

\section{Introduction}
\label{sec1}

Let $E$ a bounded domain in $\BR^d$, $E_T=[0,T]\times E$ and let
$D=\{D(t,x),(t,x)\in E_T\}$ be a family of uniformly bounded
closed convex sets in $\BR^m$ with nonempty interiors. In the
paper we study the problem, which roughly speaking can be stated
as follows: for measurable function $\varphi:E\rightarrow\BR^m$
such that $\varphi(x)\in D(T,x)$, $x\in E$, and measurable
$f:E_T\times\BR^m\times\BR^{m\times d}\rightarrow \BR^m$ find
$u=(u^1,\dots,u^m):E_T\rightarrow\BR^m$ such that
\begin{equation}
\label{eq1.1} u(t,x)\in D(t,x)\mbox{\, for\, } (t,x)\in E_T, \quad
u(T, \cdot)=\varphi, \quad u(t, \cdot)_{|\partial E}=0,\,\,t
\in(0,T)
\end{equation}
and
\begin{equation}
\label{eq1.2} \int^T_0\Big(\frac{\partial u}{\partial t}
+L_tu+f_u,v-u\Big)_H\,dt\ge0
\end{equation}
for every $v=(v^1,\dots,v^m)$ such that $v^i\in
L^2(0,T;H^1_0(E))$, $i=1,\dots,m$, and  $v(t,x)\in D(t,x)$ for
$(t,x)\in E_T $. In (\ref{eq1.2}), $(\cdot,\cdot)_H$ is the usual
inner product in $H=[L^2(E)]^m$,
\begin{equation}
\label{eq1.4} f_u(t,x)=f(t,x,u(t,x),\sigma\nabla
u^1(t,x),\dots,\sigma\nabla u^m(t,x)), \quad (t,x)\in E_T
\end{equation}
and
\[
L_{t}=\frac 12 \sum_{i,j=1}^{d} \frac{\partial}{\partial
x_{j}}\Big(a_{ij}(t,x)\frac{\partial}{\partial x_{i}}\Big),
\]
where $a: E_T\rightarrow\BR^d\otimes\BR^d$ is a measurable
symmetric matrix-valued function  such that for some
$\Lambda\ge1$,
\begin{equation}
\label{eq1.3} \Lambda^{-1}|y|^2\le\sum_{i,j=1}^{d} a_{ij}(t,x)
y_{i}y_{j}\le\Lambda |y|^{2},\quad y\in \mathbb{R}^{d}
\end{equation}
for a.e. $(t,x)\in E_{T}$.  By putting $a(t,x)=a(0,x)$ for
$t\notin[0,T]$, $x\in E$ we can and will assume that $a$ is
defined and satisfies (\ref{eq1.3}) in all $\BR\times E$. In
(\ref{eq1.4}), $\sigma$ is  the symmetric square root of $a$.

The main feature of the paper is that we deal with time-dependent
obstacles. In case of single equation, i.e. when $m=1$, problem
(\ref{eq1.1}), (\ref{eq1.2}) is quite well investigated. For
various results on existence, uniqueness and approximation of
solutions in case of $L^2$ data and one or two regular obstacles,
i.e. when $D$ has the form $D(t,x)=\{y\in\BR:\underline h(t,x)\le
y\le\bar h(t,x)\}$ for some regular $\underline h ,\bar
h:E_T\rightarrow\bar\BR$ (possibly $\underline h\equiv-\infty$ or
$\bar h\equiv+\infty$) see the monograph \cite[Sections 2.2,
2.18]{BL} and more recent papers \cite{C,K,KY}. Linear problem of
the form (\ref{eq1.1}), (\ref{eq1.2}) with $L^2$ data and one
irregular barrier is investigated in \cite{MP,P}. For recent
results on semilinear problem see \cite{K:SPA} (one merely
measurable obstacle) and \cite{K:PA} (two measurable obstacles
satisfying some separation condition). The problem with two
irregular obstacles and $L^1$ data is investigated in
\cite{KR:JEE}.

In case of systems of equations the situation is quite different.
To our knowledge,  in this case only few partial results exist
(see \cite[Section 1.2]{MP} for the existence of solutions of
weakly coupled systems and Example 9.3 and Theorem 9.2 in
\cite[Chapter 2]{L2} for the special case $0\in
D(t_2,\cdot)\subset D(t_1,\cdot)$ if $0\le t_1\le t_2$; see also
\cite{KSY} for existence results concerning  a different but
related problem). The aim of the present paper is to prove quite
general results on existence, uniqueness and approximation of
solutions of (\ref{eq1.1}), (\ref{eq1.2}) in case the data are
square integrable and $D$ satisfies some mild regularity
assumptions. The case of $L^1$ data and irregular obstacles is
more difficult but certainly deserves further investigation.

In our opinion one of the main problem one encounters when dealing
with systems and time-dependent obstacles lies in the proper
choice of the definition of a solution. In fact, the main problem
is to adopt definition which ensures uniqueness of solutions. The
definition used in the present paper is a natural extension to
systems of the definition used in one-dimensional case in \cite{P}
and then in \cite{K:SPA,K:PA,KR:JEE}. By a solution of
(\ref{eq1.1}), (\ref{eq1.2}) we mean a pair $(u,\mu)$ consisting
of a  function $u=(u^1,\dots,u^m):E_T\rightarrow\BR^m$ and vector
$\mu=(\mu^1,\dots\mu^m)$ of signed Borel measures on $E_T$
satisfying the following three conditions:
\begin{enumerate}
\item[(a)]
$u^i$ are quasi-continuous (with respect to the parabolic capacity
determined by $L_t$) functions of class $C([0,T];H)\cap
L^2(0,T;H^1_0(E))$ and $\mu^i$ are smooth (with respect to the
same capacity) measures of finite variation,

\item[(b)] $u$ is a weak solution of the problem
\[
\frac{\partial u}{\partial t}+L_tu=-f_u-\mu,\quad u(T,
\cdot)=\varphi, \quad u(t, \cdot)_{|\partial E}=0,\,\,t \in(0,T),
\]
such that $u(t,x)\in D(t,x)$ for quasi-every (q.e. for short)
$(t,x)\in(0,T]\times E$,
\item[(c)]
for every quasi-continuous function $h=(h^1,\dots,h^m)$ such that
$h(t,x)\in D(t,x)$ for q.e $(t,x)\in(0,T]\times E$,
\[
\sum^m_{i=1}\int^T_t\!\!\int_E(u^i-h^i)\,d\mu^i\le0,\quad
t\in(0,T).
\]
\end{enumerate}
In the above definition  $\mu$ may be called the ``obstacle
reaction measure". It may be interpreted as the energy we have to
add to the system to keep the solution inside $D$. Condition (c)
is some kind of minimality condition imposed on $\mu$. In case
$m=1$ it reduces to the usual minimality condition saying that
$\mu=\mu^{+}-\mu^{-}$, where $\mu^+$ (resp. $\mu^{-}$) is a
positive measure acting only when $u$ is equal to the lower
obstacle $\underline h$ (resp. upper obstacle $\bar h$). Also
remark that an important requirement in our definition is that $u$
is quasi-continuous and $\mu$ is smooth. It not only ensures that
the integral in (c) is meaningful, but also allows us to give a
probabilistic representation of solutions. In fact, this
probabilistic representation  may serve as an equivalent
definition of a solution of (\ref{eq1.1}), (\ref{eq1.2}).

As in classical monographs \cite{BL,DL,L1,L2}, and papers
\cite{KSY,KY,MP}, in the present paper we work in $L^2$ setting.
We assume that $\varphi\in [L^2(E)]^m$, $f(\cdot,\cdot,0,0)\in
L^2(0,T;[L^2(E)]^m)$ and $f(t,x,\cdot,\cdot)$ is Lipschitz
continuous for $(t,x)\in E_T$. An important, model example of
operator $L_t$ satisfying (\ref{eq1.3}) is the Laplace operator.
But as in \cite{L1,L2,MP}, to cover classic examples, like
temperature control in domains with discontinuous coefficient of
thermal conductivity (see \cite[Chapter 1, \S3.4]{L1},
\cite[Chapter I, \S3.3, 4.4]{DL}), in the paper we consider
divergence form operator with possibly discontinuous $a$. As for
$D$, we assume that $(t,x)\mapsto D(t,x)\in\mbox{Conv}$ is
continuous if we equip $\mbox{Conv}$ with the Hausdorff metric. We
also assume that $D$ satisfies the following separation condition:
one can find a solution $u_{*}\in\WW$ of the Cauchy problem
\begin{equation}
\label{eq1.5} \frac{\partial u_{*}}{\partial
t}+L_tu_{*}=-f_{*},\quad u_{*}(T)=\varphi_{*}
\end{equation}
with some $\varphi_{*}\in[L^2(E)]^m$, $f_{*}\in
L^2(0,T;[L^2(E)]^m)$ such that $u_{*}(t,x)\in D^{*}(t,x)$ for q.e.
$(t,x)\in E_T$, where $D^{*}(t,x)=\{y\in D(t,x):\dist(y,\partial
D(t,x))\ge\varepsilon\}$ for some $\varepsilon>0$. We show that
under the above assumptions there exists a unique solution
$(u,\mu)$ of (\ref{eq1.1}), (\ref{eq1.2}) and that $u$ and $\mu$
may be approximated by the penalization method.  Note that our
separation condition is not optimal, because we assume that
$\varepsilon>0$ and that $u_{*}$ is more regular than  the
solution $u$ itself (see condition (a)). The condition is also
stronger than known sufficient separation conditions in the one
dimensional case (see \cite{K:PA}). Nevertheless, it is satisfied
in  many interesting situations.

As in \cite{K:SPA,K:PA,KR:JEE}, to prove our result we use
probabilistic methods. In particular, we rely heavily on the
results of our earlier paper \cite{KRS} devoted to reflected
backward stochastic differential equations with time-dependent
obstacles and in proofs we use the  methods of the theory of
Markov processes and probabilistic potential theory. Also note
that the first results on multidimensional reflected backward
stochastic differential equations were proved in  \cite{GP} in
case $D$ is a fixed convex domain. The results of \cite{GP} were
generalized in \cite{Ou} to equations with Wiener-Poisson
filtration. For related results with some  time-depending domains
see the recent paper \cite{NO}.

\section{Preliminaries}
\label{sec2}

For $x\in\BR^m$, $z\in\BR^{m\times d}$ we set
$|x|^2=\sum^m_{i=1}|x_i|^2$, $\|z\|^2=\mbox{trace}(z^*z)$. By
$\langle\cdot,\cdot\rangle$ we denote the usual scalar product in
$\BR^m$. Given a Hilbert space $\HH$ we denote by $[\HH]^m$ its
product equipped with the usual inner product
$(u,v)_{[\HH]^m}=\sum^m_{i=1}(u^i,v^i)_{\HH}$ and norm
$\|u\|_{[\HH]^m}=((u,u)_{[\HH]^m})^{1/2}$. We will identify the
space $[L^2(0,T;\HH)]^m$ with $L^2(0,T;[\HH]^m)$.

Throughout the paper, $E$ is a nonempty bounded connected open
subset of $\BR^d$. $\bar E$ is the closure of $E$ in $\BR^d$,
$E^1=\BR\times E$, $E_T=[0,T]\times E$, $E_{0,T}=(0,T]\times E$.
We set $H=[L^2(E)]^m$, $V=[H^1_0(E)]^m$, $V'=[H^{-1}(E)]^m$, where
$H^{-1}(E)$ is the dual of the Sobolev space $H^1_0(E)$. For $u\in
V$ we set $|\!|\!|\nabla
u|\!|\!|^2_H=\sum^d_{k=1}\sum^m_{i=1}\|\frac{\partial
u^i}{\partial x_k}\|^2_{L^2(E)}$.

The Lebesgue measure on $\BR^d$ will be denoted by $m$. By $m_1$
we denote the Lebesgue measure on $E^1$.

\subsection{Convex sets and functions}

By $\mbox{\rm Conv}$ we denote the space of all bounded closed
convex subsets of $\BR^m$ with nonempty interiors endowed with the
Hausdorff metric $\rho$, that is for any $D,G\in\mbox{\rm Conv}$
we set
\[
\rho(D,G)=\max\big(\sup_{x\in D}\dist(x,G),\sup_{x\in G}
\dist(x,D)\big),
\]
where $\dist(x,D)=\inf_{y\in D}|x-y|$.

Let $D\in\mbox{\rm Conv}$ and let   ${\cal N}_{y}$ denote the set
of inward normal unit vectors at $y\in\partial D$. It is well
known (see, e.g., \cite{M}) that ${\bf n}\in{\cal N}_{y}$ if and
only if $\langle y-x,\mbox{\bf n}\rangle\leq 0$ for every $x\in
D$. If moreover $a\in\mbox{\rm Int} D$ then for every  ${\bf n}
\in{\cal N}_{y}$,
\begin{equation}
\label{eq2.3} \langle y-a,\mbox{\bf n}\rangle\leq-\dist(a,\partial
D).
\end{equation}
If $\dist(x,D)>0$ then there exists a unique
$y=\Pi_D(x)\in\partial D$ such that $|y-x|=\dist(x,D)$. One can
observe that $(y-x)/|y-x|\in{\cal N}_{y}$. Moreover (see
\cite{M}), for every $a\in\mbox{\rm Int} D$,
\begin{equation}
\label{eq2.6} \langle x-a,y-x\rangle\leq-\dist(a,\partial D)|y-x|.
\end{equation}
Also note that for any nonempty bounded closed convex sets
$D,G\subset\BR^m$ and any $x,y\in\BR^m$,
\begin{equation}
\label{eq2.1}
|\Pi_D(x)-\Pi_G(y)|^2\le|x-y|^2+2[\dist(x,D)+\dist(y,G)]\rho(D,G)
\end{equation}
(see \cite[Chapter 0, Proposition 4.7]{MM}).

\subsection{Time-dependent Dirichlet forms}

Let $\AAA=\{a(t;\cdot,\cdot), t\in\BR\}$ be the family of bilinear
forms on $H^1_0(E)\times H^1_0(E)$ defined as
\[
a(t;\varphi,\psi)=\frac12\sum^{d}_{i,j=1}\int_Ea_{ij}(t,x)
\frac{\partial\varphi}{\partial x_i}\frac{\partial\psi}{\partial
x_j}\,dx
\]
and let  $\VV=L^2(\BR;H^1_0(E))$, $\VV'=L^2(\BR;H^{-1}(E))$,
$\WW=\{u\in\VV:\frac{\partial u}{\partial t}\in\VV'\}$. We equip
$\WW$ with the usual norm
$\|u\|_{\WW}=\|u\|_{\VV}+\|\frac{\partial u}{\partial
t}\|_{\VV'}$. By $\EE$ we  denote the time-dependent form
determined by $\AAA$, i.e.
\begin{equation}
\label{eq2.23} \EE(u,v)=\left\{
\begin{array}{l}\langle-\frac{\partial u}{\partial t},v\rangle
+\int_{\BR}a(t;u(t),v(t)),
\quad u\in\WW,v\in\VV,\smallskip \\
\langle\frac{\partial v}{\partial t}, u\rangle+
\int_{\BR}a(t;u(t),v(t)),\quad u\in\VV,v\in\WW,
\end{array}
\right.
\end{equation}
where $\langle\cdot,\cdot\rangle$ is the duality pairing between
$\VV'$ and $\VV$.

For $T>0$ we set $\HH_{0,T}=L^2(0,T;L^2(E;m))$,
$\VV_{0,T}=L^2(0,T;H^1_0(E))$, $\VV'_{0,T}=L^2(0,T;H^{-1}(E))$,
$\WW_{0,T}=\{u\in\VV_{0,T}:\frac{\partial u}{\partial
t}\in\VV'_{0,T}\}$ and $\WW_0=\{u\in\WW_{0,T}:u(0)=0\}$,
$\WW_T=\{u\in\WW_{0,T}:u(T)=0\}$. By $\EE^{0,T}$ we denote the
time-dependent form defined as
\[
\EE^{0,T}(u,v)=\left\{
\begin{array}{l}\langle-\frac{\partial u}{\partial t},v\rangle
+\int^T_0a(t;u(t),v(t)),
\quad u\in\WW_T,v\in\VV_{0,T},\smallskip \\
\langle\frac{\partial v}{\partial t}, u\rangle +
\int^T_0a(t;u(t),v(t)),\quad u\in\VV_{0,T},v\in\WW_0,
\end{array}
\right.
\]
where now $\langle\cdot,\cdot\rangle$ denote the duality pairing
between $\VV'_{0,T}$ and $\VV_{0,T}$. Note that the forms $\EE$,
$\EE^{0,T}$ can be identified with some generalized Dirichlet form
(see \cite[Example I.4.9.(iii)]{S}).

By Propositions I.3.4 and I.3.6 in \cite{S} the form $\EE^{0,T}$
determines uniquely a strongly continuous resolvents
$(G^{0,T}_{\alpha})_{\alpha>0}$,  $(\hat
G^{0,T}_{\alpha})_{\alpha>0}$ on $\HH_{0,T}$ such that
$(G^{0,T}_{\alpha})_{\alpha>0}$,  $(\hat
G^{0,T}_{\alpha})_{\alpha>0}$ are sub-Markov,
$G^{0,T}_{\alpha}(\HH_{0,T})\subset\WW_T$, $\hat
G^{0,T}_{\alpha}(\HH_{0,T})\subset\WW_0$, and
\begin{equation}
\label{eq2.14}
\EE^{0,T}_{\alpha}(G^{0,T}_{\alpha}\eta,u)=(u,\eta)_{\HH_{0,T}},\quad
\EE^{0,T}_{\alpha}(u,\hat
G^{0,T}_{\alpha}\eta)=(u,\eta)_{\HH_{0,T}}
\end{equation}
for $u\in\VV_{0,T},\eta\in\HH_{0,T}$, where
$\EE^{0,T}_{\alpha}(u,v)=\EE^{0,T}(u,v)+\alpha(u,v)_{\HH_{0,T}}$
and $(\cdot,\cdot)_{\HH_{0,T}}$ denotes the usual inner product in
$\HH_{0,T}$.

In the paper by $\mbox{cap}$ we denote the parabolic capacity
determined by the form $\EE$ (for the construction and properties
of $\mbox{cap}$ see \cite[Section 4]{O1} or \cite[Section
6.2]{O2}). We will say that some property is satisfied
quasi-everywhere (q.e. for short) if it is satisfied except for
some Borel subset of $E^1$ of capacity $\mbox{cap}$ zero. Using
$\mbox{cap}$ we define quasi-continuity as in \cite{O1}. By
\cite[Theorem 4.1]{O1} 
each function $u\in\WW$
has a quasi-continuous $m_1$-version, which we will denote by
$\tilde u$.

Let $\mu$ be a Borel signed measure on $E^1$. In what follows
$|\mu|$ stands for the total variation of $\mu$. By
$\MM_{0,b}(E^1)$ we  denote the set of all Borel measures on $E^1$
such that $|\mu|$ does not charge sets of zero capacity
$\mbox{cap}$ and $|\mu|(E^1)<\infty$. By $\MM_{0,b}(E_{0,T})$ we
denote the subset of $\MM_{0,b}(E^1)$ consisting of all measures
with support in $E_{0,T}$.

\subsection{Markov processes}

By general results from the theory of Markov process (see, e.g.,
\cite[Theorems 6.3.1, 6.3.10]{O2}) there exists a continuous Hunt
process $\BBM=(\Omega,(\FF_t)_{t\ge0},(\BBX_t)_{t\ge0},\zeta,
(P_{z})_{z\in E^1\cup\Delta})$ with state space $E^1$, life time
$\zeta$ and cemetery state $\Delta$ properly associated with $\EE$
in the resolvent sense. By \cite[Theorem 5.1]{O1},
\[
\BBX_{t}=(\tau(t), X_{\tau(t)}), \quad t\ge 0,
\]
where $\tau(t)$ is the uniform motion to the right, i.e.
$\tau(t)=\tau(0)+t$, $\tau(0)=s$, $P_{z}$-a.s. for $z=(s,x)$. For
an alternative construction of $\BBM$, for which the starting
point is the fundamental solution for the operator
$\frac{\partial}{\partial t}+L_t$, see \cite[Section 2]{KR:JEE}.
It is also known (this follows for instance from the construction
of $\BBM$ given in \cite{KR:JEE}) that
$\BX=\{(X,P_{s,x}):(s,x)\in\BR_+\times E)\}$, where $X$ is the
second component of $\BBX$, is a continuous time-inhomogeneous
Markov process
whose transition density $p_E$ is the Green function for
$\frac{\partial}{\partial t}+L_t$ on $[0,T)\times E$ (for
construction and properties of  Green's function see \cite{A}).

Let $\mu,\nu$ be  Borel measures on $E^1$ and $E$, respectively.
In what follows we write
$P_{\mu}(\cdot)=\int_{E^1}P_{s,x}(\cdot)\,d\mu(s,x)$,
$P_{s,\nu}(\cdot)=\int_{E}P_{s,x}(\cdot)\,\nu(dx)$. By $E_{\mu}$
(resp. $E_{s,\nu}$) we denote the expectation with respect to
$P_{\mu}$ (resp. $P_{s,\nu}$).

Note that if $u$ is quasi-continuous then it is
$\BBM$-quasi-continuous, i.e. for q.e. $(s,x)\in E^1$,
$P_{s,x}([0,\infty)\ni t\mapsto u(\BBX_t)$ is continuous)=1 (see,
e.g., \cite[p. 298]{O1}).

Set $\sigma_B=\inf\{t>0,\BBX_t\in B\}$. It is known (see remarks
following \cite[(5.2)]{O1}) that for a Borel set $B\subset E^1$,
$\mbox{cap}(B)=0$ if and only if $B$ is $\BBM$-exceptional, i.e.
\begin{equation}
\label{eq2.2} P_{m_1}(\sigma_{B}<\infty)=\int_{\BR}P_{s,m}(\exists
t>0:\BBX_t\in B)\,ds=0.
\end{equation}

\begin{remark}
\label{rem2.1} (i) From (\ref{eq2.2}) and the fact that
$p_E(s,x,t,\cdot)$ is strictly positive on $E$ it follows  that
$\mbox{cap}(\{s\}\times \Gamma)>0$ for any $s\in(0,T)$ and Borel
set $\Gamma\subset E$ such that $m(\Gamma)>0$. Hence, if some
property holds for q.e. $(s,x)\in (0,T)\times E$ then it holds for
$m$-a.e. $x\in E$ for every $s\in(0,T)$.
\smallskip\\
(ii) If $P_{m_1}(\sigma_{B}<\infty)=0$ then in fact
$P_{s,x}(\sigma_{B}<\infty)=0$ for q.e. $(s,x)\in E^1$. This
follows from the fact that the function $(s,x)\mapsto
P_{s,x}(\sigma_{B}<\infty)$ is excessive (see remark at the end of
\cite[p. 294]{O1}).
\end{remark}

Let $A$ be a positive continuous additive functional of $\BBM$ and
let $\mu\in\MM_{0,b}(E_{0,T})$ be a positive measure. We will say
that $A$ corresponds to $\mu$  (or $\mu$ corresponds to $A$) if
for q.e. $(s,x)\in[0,T)\times E$,
\begin{equation}
\label{eq2.5} E_{s,x}\int^{\zeta_{\tau}}_0f(\BBX_t)\,dA_t
=\int^T_s\!\!\int_Ef(t,y)p_E(s,x,t,y)\,\mu(dt\,dy)
\end{equation}
for every $f\in C_c([0,T]\times E)$, where
\[
\zeta_{\tau}=\zeta\wedge(T-\tau(0)).
\]
Since $p_E(s,x,\cdot,\cdot)$ is strictly positive on  $(s,T]\times
E$, if $\mu_1$ and $\mu_2$ correspond to $A$ then $\mu_1=\mu_2$.
It is also known (see, e.g., Proposition (2.12) in \cite[Chapter
IV]{BG}) that if $\mu\in\MM_{0,b}(E_{0,T})$ corresponds to $A^1$
and to $A^2$ then $A^1$ and $A^2$ are equivalent, i.e. for each
$t\in[0,T]$, $P_{s,x}(A^1_t=A^2_t)=1$ for q.e.
$(s,x)\in[0,T)\times E$. If $\mu\in\MM_{0,b}(E_{0,T})$ then we say
that $\mu$ corresponds to $A$ if $A=A^+-A^-$, where $A^+,A^-$ are
positive continuous additive functionals of $\BBM$ such that $A^+$
corresponds to $\mu^+$ and $A^-$ corresponds to $\mu^-$ (here
$\mu^+$ (resp. $\mu^-$) is the positive (resp. negative) part of
the Jordan decomposition of $\mu$). Also note that (\ref{eq2.5})
is some sort of the Revuz correspondence.

The following proposition is probably well known, but we do not
have a reference.
\begin{proposition}
\label{prop2.1} Let $\varphi\in H$, $f\in L^{2}(0,T;H)$ and let
$\tilde u$ be a quasi-continuous version of the solution
$u\in[\WW]^m$ of the Cauchy problem
\begin{equation}
\label{eq2.7} \frac{\partial u}{\partial t}+L_tu=-f,\quad
u(T)=\varphi.
\end{equation}
Then
\[
\tilde u(\BBX_{t\wedge\zeta_{\tau}})-\tilde u(\BBX_0)
=M^{[u]}_{t\wedge\zeta_{\tau}}
-\int^{t\wedge\zeta_{\tau}}_0f(\BBX_{\theta})\,d\theta,\quad
t\in[0,T],
\]
where $M^{[u]}=(M^{[u],1},\dots,M^{[u],m})$ is a continuous
martingale AF of $\BBM$ with the qua\-dra\-tic variation
\begin{equation}
\label{eq2.8} [M^{[u],k}]_t=\sum^{d}_{i,j=1}\int^{t}_0a_{ij}
\frac{\partial u^k}{\partial x_i}\frac{\partial u^k}{\partial
x_j}(\BBX_{\theta})\,d\theta,\quad t\ge0,\quad k=1,\dots,m.
\end{equation}
Moreover,
\begin{equation}
\label{eq2.9} M^{[u],k}_t=\sum^d_{i=1}\int^t_0\frac{\partial
u^k}{\partial x_i}(\BBX_{\theta})\,dM^i_{\theta},\quad t\ge0,
\end{equation}
where $M=(M^1,\dots,M^d)$ is a continuous martingale AF of $\BBM$
with the quadratic covariation
\begin{equation}
\label{eq2.11} [M^{i},M^j]_t=\sum^{d}_{i,j=1}\int^{t}_0a_{ij}
(\BBX_{\theta})\,d\theta,\quad t\ge0,\quad k=1,\dots,m.
\end{equation}
\end{proposition}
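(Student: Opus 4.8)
The plan is to reduce Proposition \ref{prop2.1} to a Fukushima-type decomposition for the generalized Dirichlet form $\EE$, applied componentwise. First I would fix the component index $k$ and work with the scalar function $u^k\in\WW$. The key observation is that $u^k$ solves the scalar Cauchy problem $\frac{\partial u^k}{\partial t}+L_tu^k=-f^k$, $u^k(T)=\varphi^k$, and that $\tilde u^k$ is $\BBM$-quasi-continuous by the remark in Section \ref{sec2}. Since $u^k\in\WW$, the general theory of additive functionals associated with generalized Dirichlet forms (as developed in the references to Oshima \cite{O1,O2} and Stannat \cite{S}) yields a decomposition of the additive functional $t\mapsto\tilde u^k(\BBX_t)-\tilde u^k(\BBX_0)$ into a martingale additive functional $M^{[u],k}$ plus a functional of zero energy (or, more precisely, of bounded variation here, given the regularity of the right-hand side). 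The point is that because the ``potential part'' of the generator applied to $u^k$ is exactly $-f^k$ with $f^k\in L^2(0,T;L^2(E))$, the finite-variation part of the decomposition is the absolutely continuous additive functional $\int_0^{\cdot}f^k(\BBX_\theta)\,d\theta$; this is where the hypothesis $f\in L^2(0,T;H)$ enters, guaranteeing that the drift is a genuine (integrable) additive functional rather than merely a continuous additive functional of zero quadratic variation.

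Next I would identify the quadratic variation of $M^{[u],k}$. For Dirichlet forms the bracket of the martingale part of $u^k$ is given by the energy measure (the \emph{carré du champ}) of $u^k$, which for the divergence-form operator $L_t$ with coefficient $a_{ij}$ is $\sum_{i,j}a_{ij}\frac{\partial u^k}{\partial x_i}\frac{\partial u^k}{\partial x_j}$ integrated in time. So (\ref{eq2.8}) should follow by computing the energy measure of the form $\EE$ on the single function $u^k$ and invoking the Revuz correspondence of Section \ref{sec2} to pass from the measure $\sum_{i,j}a_{ij}\frac{\partial u^k}{\partial x_i}\frac{\partial u^k}{\partial x_j}\,dx\,dt$ to the additive functional $[M^{[u],k}]$. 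The cutoff at $\zeta_\tau=\zeta\wedge(T-\tau(0))$ is just the natural time at which the space-time process leaves $[0,T)\times E$, consistent with $u^k$ being defined on $E_T$.

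To obtain (\ref{eq2.9}) and (\ref{eq2.11}), I would first apply the componentwise decomposition to the coordinate functions $x\mapsto x_i$ (suitably localized, since these are not in $H^1_0$ globally but the energy measures make sense locally), producing the martingale additive functionals $M^i$ with covariation $[M^i,M^j]_t=\int_0^t a_{ij}(\BBX_\theta)\,d\theta$. Then I would represent $M^{[u],k}$ as a stochastic integral $\sum_i\int_0^{\cdot}\frac{\partial u^k}{\partial x_i}(\BBX_\theta)\,dM^i_\theta$ by checking that the right-hand side has the same quadratic variation (\ref{eq2.8}); this is a standard chain-rule identity for martingale additive functionals, verified by computing $\sum_{i,j}\frac{\partial u^k}{\partial x_i}\frac{\partial u^k}{\partial x_j}\,d[M^i,M^j]$ and matching it against (\ref{eq2.8}) using (\ref{eq2.11}). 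The main obstacle, I expect, is the generalized (non-symmetric, time-dependent) nature of the form $\EE$: the clean Fukushima decomposition is classically stated for symmetric Dirichlet forms, so the technical heart of the argument is to justify that the decomposition and the energy-measure identification carry over to the generalized Dirichlet form setting of \cite{S,O1,O2}, including the fact that the finite-variation part is precisely the drift $\int_0^\cdot f(\BBX_\theta)\,d\theta$ with no extra singular component. Once that decomposition is available in the required generality, the identities (\ref{eq2.8})--(\ref{eq2.11}) are essentially bookkeeping via the Revuz correspondence (\ref{eq2.5}).
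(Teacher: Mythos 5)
Your overall strategy---decompose $\tilde u^k(\BBX)$ into a martingale AF plus a finite-variation part via the theory of generalized Dirichlet forms, identify the finite-variation part with the drift $-\int_0^{\cdot} f^k(\BBX_\theta)\,d\theta$ using the fact that $u^k$ solves the Cauchy problem, and read off the bracket from the energy measure---is essentially the route the paper takes: it invokes the decomposition theorem of \cite[Theorem 4.5]{T} for the generalized Dirichlet form $\EE^{0,T}$, identifies the zero-energy part $N^{[u]}_t=-\int_0^t f(\BBX_\theta)\,d\theta$ by computing $\lim_{\alpha\to\infty}\alpha^2E_{v\cdot m_1}\int_0^{\zeta_\tau}e^{-\alpha t}A_t\,dt=(f,v)_{L^2(0,T;H)}=\EE^{0,T}(u,v)$ and appealing to an analogue of \cite[Theorem 7.4]{O1}, and obtains (\ref{eq2.8}) by adapting the proof of \cite[(7.13)]{O1}. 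Up to that point your outline is sound, modulo the technical work of transplanting the symmetric theory to the time-dependent setting, which you correctly flag as the main burden.

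The genuine gap is in your derivation of (\ref{eq2.9}). You propose to identify $M^{[u],k}$ with $M^{u,k}:=\sum_i\int_0^{\cdot}\frac{\partial u^k}{\partial x_i}(\BBX_\theta)\,dM^i_\theta$ by checking that $M^{u,k}$ has the quadratic variation (\ref{eq2.8}), computed from (\ref{eq2.11}). Equality of quadratic variations does not imply equality of martingales (consider $M$ and $-M$). To close the argument along your lines you would need the cross-variation identity $[M^{[u],k},M^j]_t=\sum_i\int_0^t a_{ij}\frac{\partial u^k}{\partial x_i}(\BBX_\theta)\,d\theta$, i.e.\ the derivation property of the polarized energy measure, so as to conclude $[M^{[u],k}-M^{u,k}]=0$; this identity is not a consequence of (\ref{eq2.8}) and (\ref{eq2.11}) alone and requires its own proof in the generalized-form setting. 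The paper sidesteps this by a different mechanism: it takes smooth $u_n\to u$ in $\WW$, applies the chain rule of \cite[Theorem 5.5]{T} to get the exact identity $M^{[u_n],k}=M^{u_n,k}$ for smooth functions, and then shows that both sides are $e$-convergent to $M^{[u],k}$ and $M^{u,k}$ respectively. Either fix is acceptable, but as written your step for (\ref{eq2.9}) does not go through.
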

\begin{proof} We provide sketch of the proof.
Since  $\EE^{0,T}$ is a generalized Dirichlet form, it follows
from \cite[Theorem 4.5]{T} that
\[
\tilde u(\BBX_{t\wedge\zeta_{\tau}})-\tilde u(\BBX_0)
=M^{[u]}_{t\wedge\zeta_{\tau}}
+N^{[u]}_{t\wedge\zeta_{\tau}},\quad t\in[0,T],
\]
where $M^{[u]}=(M^{[u],1},\dots,M^{[u],m})$ is a continuous
martingale AF of $\BBM$ and $N^{[u]}$ is continuous AF of $\BBM$
of finite energy. Let $A_t=\int^t_0f(\BBX_{\theta})\,d\theta$,
$t\in[0,T]$. One can show that for every  $v\in\WW_T$,
\begin{equation}
\label{eq2.13} \lim_{\alpha\rightarrow\infty}\alpha^2E_{v\cdot
m_1}\int^{\zeta_{\tau}}_0 e^{-\alpha t}A_t\,dt=(f,v)_{L^2(0,T;H)}.
\end{equation}
Since $u$ satisfies (\ref{eq2.7}), we see that the left-hand side
of (\ref{eq2.13}) equals $\EE^{0,T}(u,v)$. From this and an
analogue of \cite[Theorem 7.4]{O1} for the form $\EE^{0,T}$ it
follows that $N^{[u]}_t=-A_t$, $t\in[0,T]$. Modifying slightly the
proof of \cite[(7.13)]{O1} we show (\ref{eq2.8}). Finally, to show
(\ref{eq2.9}), let us denote by $M^{u,k}$ the process on the
right-hand side of (\ref{eq2.9}) and consider a sequence $\{u_n\}$
of smooth functions such that $u_n\rightarrow u$ in $\WW$. Then by
the chain rule (see \cite[Theorem 5.5]{T}),
$M^{[u_n],k}=M^{u_n,k}$. It is clear that $\{M^{u_n,k}\}_n$ is
$e$-convergent to $M^{u,k}$. On the other hand, arguing as in the
proof of \cite[Theorem 7.2]{O1}) we show that $\{M^{[u_n],k}\}_n$
is $e$-convergent to $M^{[u],k}$, which proves (\ref{eq2.9}).
\end{proof}

Let $Y$ be a special $((\FF_t),P_z)$-semimartingale  on $[0,T]$,
i.e. an  $((\FF_t),P_z)$-semimar\-tingale admitting a (unique)
decomposition $Y_t=Y_0+M_t+B_t$, $t\in[0,T]$, with $M_0=B_0=0$,
$M$ a local $((\FF_t),P_z)$-martingale and $B$ an
$(\FF_t)$-predictable finite variation process (see, e.g.,
\cite[Section III.7]{Pr}). Recall that the $\HH^2(P_z)$ norm of
$Y$ is defined to be
$\|Y\|_{\HH^2(P_z)}=(E_z|Y_0|^2)^{1/2}+(E_z[M]_T)^{1/2}
+(E_z|B|_T^2)^{1/2}$, where  $[M]$ is the quadratic variation of
$M$ and $|B|_T$ is the variation of $B$ on the interval $[0,T]$.
By $\HH^2(P_z)$ we denote the space of  all special
$((\FF_t),P_z)$-semimartingales  on $[0,T]$ with finite
$\HH^2(P_z)$ norm.

\begin{remark}
(i) Let $z\in E_{0,T}$ and let $\varphi,f$ satisfy the assumptions
of Proposition \ref{prop2.1}. Then  $M^{[u]}$ of Proposition
\ref{prop2.1} is a martingale under $P_z$ (see \cite[p. 327]{T}),
$E_z|\tilde u(\BBX_0)|^2=|\tilde u(z)|^2<\infty$,
$E_z[M^{[u],k}_{\cdot\wedge\zeta_{\tau}}]_T<\infty$,
$E_z(\int^{\zeta_{\tau}}_0|f(\BBX_t)|\,dt)^2<\infty$ for q.e.
$z\in E_{0,T}$. Hence $Y=\tilde u(\BBX_{\cdot\wedge\zeta_{\tau}})$
is a semimartingale of class $\HH^2(P_z)$ for q.e. $z\in E_{0,T}$.
\smallskip\\
(ii) Let $M=(M^1,\dots,M^d)$ be the AF of Proposition
\ref{prop2.1}. Then $M$ is a martingale under $P_z$ for q.e. $z\in
E_{0,T}$ (see \cite[Section 5.1.2]{O2}). Set
\begin{equation}
\label{eq2.12}
B^i_t=\sum^d_{j=1}\int^t_0\sigma^{-1}_{ij}(\BBX_{\theta})\,dM^j_{\theta},
\quad t\ge0,\quad i=1,\dots,d,
\end{equation}
where $\sigma^{-1}$ is the inverse matrix of $\sigma$. By L\'evy's
theorem and (\ref{eq2.11}), for q.e. $z\in E_{0,T}$ the process
$B=(B^1,\dots,B^d)$ is under $P_z$ a $d$-dimensional standard
Brownian motion with respect to $(\FF_t)_{t\ge0}$. Finally, note
that by (\ref{eq2.9}) and (\ref{eq2.12}),
\[
M^{[u],k}_t =\sum_{i,j=1}^d\int^t_0\frac{\partial u^k}{\partial
x_i}(\BBX_{\theta})\sigma_{ij}(\BBX_{\theta})\,dB^j_{\theta},\quad
t\ge0,\quad k=1,\dots,m.
\]
\end{remark}

\section{Probabilistic solutions of the obstacle problem}
\label{sec3}

Let $\varphi:E\rightarrow\BR^m$,
$f:E_T\times\BR^m\times\BR^{m\times d}\rightarrow \BR^m$ be
measurable functions and let $D=\{D(t,x):(t,x)\in E_T\}$ be a
family of  closed convex sets in $\BR^m$ with nonempty interiors.
Given $\varepsilon>0$ we set $D^{*}(t,x)=\{y\in
D(t,x):\dist(y,\partial D(t,x))\ge\varepsilon\}$. We will assume
that

\begin{enumerate}
\item[(A1)]$\varphi(x)\in D(T,x)$ for $x\in E$, $\varphi\in H$,

\item[(A2)]$f(\cdot,\cdot,0,0)\in L^2(0,T;H)$,

\item[(A3)]$f:E_T\times\BR^{m}\times\BR^{m\times d}\rightarrow\BR^m$
is a measurable function and there exist $\alpha,\beta\ge0$ such
that
\[
|f(t,x,y_1,z_1)-f(t,x,y_2,z_2)|
\le\alpha|y_1-y_2|+\beta\|z_1-z_2\|
\]
for all $y_1,y_2\in\BR^m$ and $z_1,z_2\in\BR^{m\times d}$.
\end{enumerate}

As for the family $D$,   we will need the following assumptions:
\begin{enumerate}
\item[(D1)] The sets $D(t,x)$ are bounded uniformly in $(t,x)\in E_T$ and the
mapping $E_T\ni(t,x)\mapsto D(t,x)\in\mbox{\rm Conv}$ is
continuous.
\item[(D2)]For
some $\varepsilon>0$ chosen so that Int$D^{*}(t,x)\neq\emptyset$
for $(t,x)\in E_T$ and some $f_{*}\in L^{2}(0,T;H)$ and
$\varphi_{*}\in H $ such that $\varphi_{*}(x)\in D^{*}(T,x)$ for
$x\in E$ there exists a solution $u_{*}\in\WW$ of the Cauchy
problem (\ref{eq1.5})
such that $u_{*}(t,x)\in D^{*}(t,x)$ for q.e. $(t,x)\in E_{0,T}$.
\end{enumerate}

\begin{remark}
\label{rem3.1} Condition (D2) implies the following condition:
\begin{enumerate}
\item[(D3)]There exists a quasi-continuous function
$u_{*}:E_{0,T}\rightarrow\BR^m$ such that the process $Y^{*}$
defined as $Y^{*}_t=u_{*}(\BBX_{t\wedge\zeta_{\tau}})$,
$t\in[0,T]$, has the following properties:
\begin{enumerate}
\item[(a)]
$Y^{*}_t\in \mbox{Int}D(\BBX_{t\wedge\zeta_{\tau}})$ and
$\dist(Y^{*}_{t},\partial D(\BBX_{t\wedge\zeta_{\tau}}))
\ge\varepsilon$, $t\in[0,T]$, $P_{s,x}$-a.s. for q.e. $(s,x)\in
E_{0,T}$,
\item[(b)]
$Y^{*,i}$, $i=1,\dots,m$, is a special semimartingale of class
$\HH^2(P_{s,x})$  for q.e. $(s,x)\in[0,T)\times E$.
\end{enumerate}
\end{enumerate}
To see this, it suffices to consider a quasi-continuous version
$\tilde u_{*}$ of $u_{*}$ of condition (D2).  From
quasi-continuity of $\tilde u_{*}$, condition  (D2)  and Remark
\ref{rem3.5} it follows  that the process $Y^{*}=\tilde
u_{*}(\BBX_{\cdot\wedge\zeta_{\tau}})$ satisfies condition (a)
(see Remark \ref{rem3.5}(i) below). By Proposition \ref{prop2.1},
$Y^{*}$ admits decomposition (\ref{eq2.7}) with $u$ replaced by
$u_{*}$ and $f$ replaced by $f_{*}$. Therefore $Y^{*}$ satisfies
condition (b) by remarks following Proposition \ref{eq2.1}.
\end{remark}

Condition  (D2) is satisfied in the following natural situations.
\begin{example}
Assume that one can find  $r>0$ such that for every $(t,x)\in
E_T$, $B(0,r)\subset D(t,x)$, where $B(0,r)$ denotes the open ball
with radius $r$ and center at $0$. Then $D$ satisfies (D2). To see
this, it suffices to consider the constant function
$u_{*}=(u_{*}^1,\dots,u_{*}^m)=(0,\dots,0)$. For instance, the
above condition on $D$ is satisfied if
$D(t,x)=\{y\in\BR^m:\underline h^i(t,x)\le y_i\le \bar
h^i(t,x),\,i=1,\dots,m\}$, where $\underline h^i,\bar
h^i:[0,T]\times\bar E\rightarrow\BR$ are continuous functions such
that  $\underline h^i(t,x)<0<\bar h^i(t,x)$ for
$(t,x)\in[0,T]\times\bar E$, $i=1,\dots,m$.
\end{example}

\begin{example}
Let $D(t,x)=\{y\in\BR^m:\underline h^i(t,x)\le y_i\le \bar
h^i(t,x),\,i=1,\dots,m\}$, where $\underline h^i,\bar
h^i:[0,T]\times\bar E\rightarrow\BR$ are continuous functions such
that $\underline h^i(t,x)<\bar h^i(t,x)$ for
$(t,x)\in[0,T]\times\bar E$. Assume that there are continuous
functions $\underline h^i_0,\bar h^i_0:[0,T]\times\bar
E\rightarrow\BR$ such that $\underline h^i(t,x)\le \underline
h^i_0(t,x)<\bar h^i_0(t,x)\le \bar h^i(t,x)$ for
$(t,x)\in[0,T]\times\bar E$,  $\underline h^i_0,\bar
h^i_0\in\WW_{0,T}$ and $(\frac{\partial}{\partial
t}+L_t)\underline h^i_0,\, (\frac{\partial}{\partial t}+L_t)\bar
h^i_0\in L^2(0,T;L^2(E))$, $i=1,\dots,m$. Then (D2) is satisfied
with $u^i_{*}=(\bar h^i_0+\underline h^i_0)/2$, $i=1,\dots,m$. Of
course, the last condition for $\underline h^i_0,\bar h^i_0$ is
satisfied if $\underline h^i_0,\bar h^i_0\in
W^{1,2}_{2}((0,T)\times E)$ and $\frac{\partial a_{ij}}{\partial
x_k}\in L^{\infty}((0,T)\times E)$ for $i,j,k=1,\dots,d$.
\end{example}

In what follows we are going to show that under  assumptions
(A1)--(A3), (D1), (D2) there exists a unique solution of the
problem (\ref{eq1.1}), (\ref{eq1.2}). It is convenient to start
with probabilistic solutions. Solutions of (\ref{eq1.1}),
(\ref{eq1.2}) in the sense of the definition given in Section
\ref{sec1} will be studied in the next section. Note that the
definition formulated below is an extension, to the case of
systems, of the probabilistic definition adopted in
\cite{K:PA,KR:JEE} in case of single equation.

\begin{definition}
\label{def3.4} We say that a pair $(u,\mu)$ consisting of a
measurable  function $u=(u^1,\dots,u^m):E_{0,T}\rightarrow\BR^m$
and a measure $\mu=(\mu^1,\dots,\mu^m)$ on $E_{0,T}$ is a
probabilistic solution of the obstacle problem with data
$\varphi,f, D$ (OP$(\varphi,f,D)$ for short) if
\begin{enumerate}
\item[(a)]$u$ is quasi-continuous, $f^i_u\in L^1(E_{0,T})$,
$\mu^i\in\MM_{0,b}(E_{0,T})$, $i=1,\dots,m$,
\item[(b)] For q.e. $(s,x)\in E_{0,T}$ and $i=1,\dots,m$,
\begin{align}
\label{eq3.2} u^i(\BBX_t)&=\varphi^i(\BBX_{\zeta_{\tau}})
+\int^{\zeta_{\tau}}_{t\wedge\zeta_{\tau}}f^i_u(\BBX_{\theta})\,d\theta
+\int^{\zeta_{\tau}}_{t\wedge\zeta_{\tau}}dA^{\mu^i}_{\theta}
\nonumber\\
&\quad-\int^{\zeta_{\tau}}_{t\wedge\zeta_{\tau}}\sigma\nabla
u^i(\BBX_{\theta})\,dB_{\theta},\quad t\in[0,T], \quad
P_{s,x}\mbox{-a.s.},
\end{align}
where $A^{\mu^i}$, $i=1,\dots,m$, is the continuous additive
functional of $\BBM$ associated with $\mu^i$ in the sense of
(\ref{eq2.5}),
\item[(c)]$u(t,x)\in D(t,x)$ for q.e. $(t,x)\in E_{0,T}$ and for
every quasi-continuous function $h=(h^1,\dots,h^m)$ such that
$h(t,x)\in D(t,x)$ for q.e. $(t,x)\in E_{0,T}$ we have
\begin{equation}
\label{eq3.19} \int^{\zeta_{\tau}}_0\langle
u(\BBX_t)-h(\BBX_t),dA^{\mu}_t\rangle\le0,\quad
P_{s,x}\mbox{-a.s.}
\end{equation}
for q.e. $(s,x)\in E_T$.
\end{enumerate}
\end{definition}

\begin{remark}
\label{rem3.5} (i) From the fact that $u$ is quasi-continuous and
$u(t,x)\in D(t,x)$ for q.e. $(t,x)\in E_{0,T}$ it follows that
$u(\BBX_{t\wedge\zeta_{\tau}})\in D(\BBX_{t\wedge\zeta_{\tau}})$,
$t\in[0,T]$, $P_{s,x}$-a.s. for q.e. $(s,x)\in E_{0,T}$. To see
this, let us set $B=\{(t,x)\in E_{0,T}:u(t,x)\not\in D(t,x)\}$ and
$\sigma_B=\inf\{t>0:\BBX_t\in B\}$. Since $\mbox{cap}(B)=0$, the
set $B$ is $\BBM$-exceptional, and hence, by \ref{rem2.1}(ii),
$P_{s,x}(\sigma_B<\infty)=0$ for q.e. $(s,x)\in E_{0,T}$. Hence
$P_{s,x}(\BBX_{t\wedge\zeta_{\tau}}\in B, t\in(0,T])=0$, which
implies that $u(\BBX_{t\wedge\zeta_{\tau}})\in
D(\BBX_{t\wedge\zeta_{\tau}})$, $t\in(0,T]$, $P_{s,x}$-a.s. for
q.e. $(s,x)\in E_{0,T}$. In fact, we can replace $(0,T]$ by
$[0,T]$ , because $\BBX$ is right-continuous at $t=0$ and $D$
satisfies (D1).
\smallskip\\
(ii) Conditions (\ref{eq3.2}), (\ref{eq3.19}) of the above
definition say that under $P_{s,x}$ the pair
$(Y_t,Z_t)=(u(\BBX_{t\wedge\zeta_{\tau}}),\sigma\nabla
u(\BBX_{t\wedge\zeta_{\tau}}))$, $t\in[0,T]$, is a solution of the
generalized Markov-type reflected BSDE with final condition
$\varphi$, coefficient $f$, finite variation process $A^{\mu}$ and
obstacle $D$. 
\\
(iii) Taking $t=0$ in (\ref{eq3.2}) and then integrating with
respect to $P_{s,x}$ we see that for  q.e. $(s,x)\in E_{0,T}$,
\[
u^i(s,x)=E_{s,x}\Big(\varphi^i(\BBX_{\zeta_{\tau}})
+\int^{\zeta_{\tau}}_0f^i_u(\BBX_{\theta})\,d\theta
+\int^{\zeta_{\tau}}_0dA^{\mu^i}_{\theta}\Big),\quad i=1,\dots,m.
\]
By this and \cite[Proposition 3.4]{K:JFA}, if $u\in L^{2}(0,T;H)$
then $u^i\in C([0,T];L^2(E))$, $i=1,\dots,m$.
\smallskip\\
(iv) From continuity of $A^{\mu^i}$ and (\ref{eq2.5}) one can
deduce that $\mu^i(\{t\}\times E)=0$ for every $t\in[0,T]$.
\end{remark}

We begin with uniqueness of probabilistic solutions.

\begin{proposition}
Assume \mbox{\rm(A3), (D1)}. Then there exists at most one
probabilistic solution of \mbox{\rm OP$(\varphi,f,D)$}.
\end{proposition}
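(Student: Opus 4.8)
The plan is to compare two probabilistic solutions $(u,\mu)$ and $(\bar u,\bar\mu)$ by applying It\^o's formula to the square of the difference of the associated semimartingales and then invoking the minimality condition (c) of Definition \ref{def3.4}. Fix $(s,x)\in E_{0,T}$ outside the relevant exceptional sets and work under $P_{s,x}$. Write $\delta Y_t=u(\BBX_{t\wedge\zeta_{\tau}})-\bar u(\BBX_{t\wedge\zeta_{\tau}})$ and $\delta Z_t=\sigma\nabla u(\BBX_{t\wedge\zeta_{\tau}})-\sigma\nabla\bar u(\BBX_{t\wedge\zeta_{\tau}})$. Both solutions satisfy (\ref{eq3.2}) with the same final value $\varphi(\BBX_{\zeta_{\tau}})$, so $\delta Y_{\zeta_{\tau}}=0$, and subtracting the two backward equations gives the dynamics $d(\delta Y_\theta)=-(f_u-f_{\bar u})(\BBX_\theta)\,d\theta-d(A^{\mu}-A^{\bar\mu})_\theta+\delta Z_\theta\,dB_\theta$ on $[0,\zeta_{\tau}]$.

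First I would apply It\^o's formula to $|\delta Y_t|^2$, use $\delta Y_{\zeta_{\tau}}=0$, and take expectations. After localizing by a sequence of stopping times so that the stochastic integral against $dB$ is a true martingale with vanishing expectation (the required square-integrability coming from condition (a), the remarks following Proposition \ref{prop2.1}, and the uniform boundedness of $D$ in (D1)), this yields
\[
|\delta Y_0|^2+E_{s,x}\int_0^{\zeta_{\tau}}\|\delta Z_\theta\|^2\,d\theta=2E_{s,x}\int_0^{\zeta_{\tau}}\langle\delta Y_\theta,(f_u-f_{\bar u})(\BBX_\theta)\rangle\,d\theta+2E_{s,x}\int_0^{\zeta_{\tau}}\langle\delta Y_\theta,d(A^{\mu}-A^{\bar\mu})_\theta\rangle.
\]
The last term is the decisive one: since $\bar u$ is quasi-continuous with $\bar u(t,x)\in D(t,x)$ q.e., it is an admissible test function $h$ in (\ref{eq3.19}) for the solution $(u,\mu)$, and symmetrically $u$ is admissible for $(\bar u,\bar\mu)$; adding the two instances of (\ref{eq3.19}) shows $E_{s,x}\int_0^{\zeta_{\tau}}\langle\delta Y_\theta,d(A^{\mu}-A^{\bar\mu})_\theta\rangle\le0$. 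For the first term I would use the Lipschitz bound (A3), $|f_u-f_{\bar u}|\le\alpha|\delta Y|+\beta\|\delta Z\|$, with Young's inequality $2\beta|\delta Y|\,\|\delta Z\|\le\beta^2|\delta Y|^2+\|\delta Z\|^2$; the $\|\delta Z\|^2$ contribution is absorbed by the left-hand side, leaving $|\delta Y_0|^2\le(2\alpha+\beta^2)E_{s,x}\int_0^{\zeta_{\tau}}|\delta Y_\theta|^2\,d\theta$.

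Since $\delta Y_0=u(s,x)-\bar u(s,x)$ is deterministic under $P_{s,x}$, setting $g(s,x)=|u(s,x)-\bar u(s,x)|^2$ and expressing the right-hand side through the correspondence (\ref{eq2.5}) gives $g(s,x)\le(2\alpha+\beta^2)E_{s,x}\int_0^{\zeta_{\tau}}g(\BBX_\theta)\,d\theta$ for q.e. $(s,x)$. Here (D1) is essential: the uniform boundedness of the sets $D(t,x)$ makes $g$ bounded q.e., so iterating the inequality via the Markov property and the additivity of the functionals, and exploiting the time-ordering on the bounded interval $[s,T]$, produces $g(s,x)\le\|g\|_\infty(2\alpha+\beta^2)^n(T-s)^n/n!\to0$. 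Hence $u=\bar u$ q.e. With this, the martingale and Lebesgue parts in (\ref{eq3.2}) coincide, forcing $A^{\mu}=A^{\bar\mu}$ as additive functionals; uniqueness of the Revuz correspondence (\ref{eq2.5}), guaranteed by the strict positivity of $p_E$, then gives $\mu=\bar\mu$. The principal obstacle I anticipate is the rigorous localization turning the $dB$-integral into a genuine martingale and, in tandem, the careful bookkeeping of the q.e. exceptional sets, so that the scalar inequality for $g$ and its iteration hold simultaneously off a single capacity-zero set.
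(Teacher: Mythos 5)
Your proposal is correct and follows essentially the same route as the paper: It\^o's formula applied to the squared difference, the minimality condition (\ref{eq3.19}) used symmetrically to dispose of the reaction-measure cross term (you test with the other solution directly, while the paper tests with its projection $\Pi_{D(\cdot,\cdot)}$, which coincides with it q.e.\ since both solutions are $D$-valued), a Lipschitz estimate to reduce to a Gronwall-type inequality yielding $u=\bar u$, and then identification of $Z$, $A^{\mu}$ and finally $\mu$ via the Revuz correspondence. The only cosmetic deviations are your replacement of the paper's direct appeal to Gronwall's lemma by a Picard-type iteration (legitimate, using the boundedness of $u-\bar u$ from (D1)) and your explicit localization of the stochastic integral, which the paper leaves implicit.
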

\begin{proof}
Let $(u_1,\mu_1)$, $(u_2,\mu_2)$ be two solutions of
OP$(\varphi,f,\DD)$. Set $u=u_1-u_2$, $\mu=\mu_1-\mu_2$ and
$Y_t=u(\BBX_{t\wedge\zeta_{\tau}}), Z_t=\sigma\nabla
u(\BBX_{t\wedge\zeta_{\tau}}), K_t=A^{\mu}_{t\wedge\zeta_{\tau}}$,
$D_t=D(\BBX_{t\wedge\zeta_{\tau}})$, $t\in[0,T]$. Applying It\^o's
formula and using (A3) shows that there is $C>0$ depending only on
$\alpha,\beta$ such that for q.e. $(s,x)\in E_T$,
\[
E_{s,x}|Y_t|^2+\frac12E_{s,x}
\int^{\zeta_{\tau}}_{t\wedge\zeta_{\tau}}\|Z_{\theta}\|^2\,d\theta
\le
CE_{s,x}\int^{\zeta_{\tau}}_{t\wedge\zeta_{\tau}}|Y_{\theta}|^2\,d\theta
+2E_{s,x}\int^{\zeta_{\tau}}_{t\wedge\zeta_{\tau}}\langle
Y_{\theta} ,dK_{\theta}\rangle
\]
for all $t\in[s,T]$. By (D1) and (\ref{eq2.1}) the mapping
$E_T\times\BR^m\ni(t,x,y)\mapsto\Pi_{D(t,x)}(y)\in\BR^m$ is
continuous. Therefore the mappings
$E_T\ni(t,x)\mapsto\Pi_{D(t,x)}(u_i(t,x))\in\BR^m$, $i=1,2$, are
quasi-continuous. Using this and condition (\ref{eq3.19}) we get
\begin{align*}
\nonumber\langle Y_{t},dK_t\rangle &=\langle
Y^1_{t}-\Pi_{D_{t}}(Y^2_{t}),dK^1_t\rangle
+\langle\Pi_{D_{t}}(Y^2_{t})-Y^2_t,dK^1_t\rangle\\
&\qquad+\langle Y^2_{t}-\Pi_{D_{t}}(Y^1_{t}),dK^2_t\rangle
+\langle\Pi_{D_{t}}(Y^1_{t})-Y^1_t,dK^2_t\rangle\\
&\le|\Pi_{D_{t}}(Y^2_{t})-Y^2_{t}|\,d|K^1|_t
+|\Pi_{D_{t}}(Y^1_{t})-Y^1_{t}|\,d|K^2|_t=0.
\end{align*}
Hence
\begin{equation}
\label{eq3.1} E_{s,x}|Y_t|^2+\frac12E_{s,x}
\int^{\zeta_{\tau}}_{t\wedge\zeta_{\tau}}\|Z_{\theta}\|^2\,d\theta
\le C\int^{\zeta_{\tau}}_{t\wedge\zeta_{\tau}}
E_{s,x}|Y_{\theta}|^2\,d\theta,\quad t\in[s,T]
\end{equation}
for q.e. $(s,x)\in E_T$. Applying Gronwall's lemma yields
$E_{s,x}|Y_t|^2=0$. Since $Y^1,Y^2$ are continuous, $Y^1_t=Y^2_t$,
$t\in[0,T]$, $P_{s,x}$-a.s. for q.e. $(s,x)\in E_T$. This and
(\ref{eq3.1}) imply that $Z^1=Z^2$, $P_{s,x}\otimes dt$-a.e. on
$\Omega\times[0,T]$. That $K^1_t=K^2_t$, $t\in[0,T]$,
$P_{s,x}$-a.s. now follows from the fact that $(Y^i,Z^i,K^i)$,
$i=1,2$ satisfy the equation of condition (b) of Definition
\ref{def3.4}.
\end{proof}

\begin{proposition}
\label{prop3.7} Assume that $\varphi,f$ satisfy
\mbox{\rm(A1)--(A3)}, $D$ satisfies \mbox{\rm(D1)} and there
exists $u_{*}=(u^1_{*},\dots u^m_{*})\in[\WW]^m$ such that
$u_{*}\in D(t,x)$ for q.e. $(t,x)\in E_{0,T}$.
\begin{enumerate}
\item[\rm(i)] For every $n\in\BN$ there exists a unique strong solution
$u_n\in[\WW]^m$ of the  problem
\begin{equation}
\label{eq3.4} \frac{\partial u_n}{\partial t}+L_tu_n
=-f_{u_n}+n(u_n-\Pi_{D(\cdot,\cdot)}(u_n)),\quad u_n(T)=\varphi.
\end{equation}
\item[\rm(ii)]There is $C$ depending only on $\alpha,\beta,\Lambda$ and
$\|u_{*}\|_{[\WW]^m}$ such that
\begin{equation}
\label{eq3.5} \sup_{0\le s\le T}\|u_n(s)\|^2_H
+\int^T_0|\!|\!|\nabla u_n(t)|\!|\!|^2_H\,dt \le
C(\|\varphi\|^2_H+\|f(\cdot,\cdot,0,0)\|^2_{L^{2}(0,T;H)}).
\end{equation}
\end{enumerate}
\end{proposition}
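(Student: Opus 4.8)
The plan is to handle the two parts separately, treating (i) as an existence/uniqueness result for a semilinear penalized system and (ii) as an a priori energy estimate. For part (i), the penalized equation \eqref{eq3.4} is a system of the form $\frac{\partial u_n}{\partial t}+L_tu_n=-F_n(t,x,u_n,\sigma\nabla u_n)$ where $F_n(t,x,y,z)=f(t,x,y,z)-n(y-\Pi_{D(t,x)}(y))$. The key observation is that, by (D1) together with estimate \eqref{eq2.1}, the map $(t,x,y)\mapsto\Pi_{D(t,x)}(y)$ is continuous, and the projection $y\mapsto\Pi_{D(t,x)}(y)$ onto a convex set is $1$-Lipschitz in $y$; hence $y\mapsto n(y-\Pi_{D(t,x)}(y))$ is Lipschitz in $y$ uniformly in $(t,x)$ (with constant at most $2n$). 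Combined with (A3), $F_n$ satisfies a global Lipschitz condition in $(y,z)$ and $F_n(\cdot,\cdot,0,0)\in L^2(0,T;H)$ by (A2) and boundedness of the sets $D(t,x)$. Existence and uniqueness of a strong solution $u_n\in[\WW]^m$ then follow from standard monotonicity/fixed-point theory for semilinear divergence-form systems with Lipschitz reaction term (e.g. a Banach fixed-point argument on $L^2(0,T;H)$ using the contraction gained from the coercivity \eqref{eq1.3}, or the classical Lions-type theory as in \cite{L1,L2}). This part I expect to be essentially routine once the Lipschitz property of the penalization term is recorded.

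The heart of the proposition is the a priori bound (ii), and this is where the separation hypothesis $u_*\in D(t,x)$ is used. The plan is to test the equation against $u_n-u_*$. First I would rewrite \eqref{eq3.4} using $u_*$: subtract and pair in $H$, integrating over $(s,T)$, to obtain an energy identity of the schematic form
\[
\tfrac12\|u_n(s)-u_*(s)\|_H^2+\int_s^T a(t;u_n-u_*,u_n-u_*)\,dt
=\int_s^T(f_{u_n}+\mu_n^{\mathrm{pen}},u_n-u_*)_H\,dt+\text{(data terms)},
\]
where $\mu_n^{\mathrm{pen}}=n(u_n-\Pi_{D(\cdot,\cdot)}(u_n))$ is the penalization term and I have absorbed the $\frac{\partial u_*}{\partial t}+L_t u_*=-f_*$ relation. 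The decisive point is the sign of the penalization contribution: since $u_*(t,x)\in D(t,x)$ q.e. and $\Pi_{D(t,x)}(u_n)$ is the projection onto the convex set, the characterization preceding \eqref{eq2.3}, namely $\langle y-x,\mathbf n\rangle\le 0$ for $x\in D$, gives
\[
\langle u_n-\Pi_{D(\cdot,\cdot)}(u_n),u_n-u_*\rangle\ge|u_n-\Pi_{D(\cdot,\cdot)}(u_n)|^2\ge 0,
\]
so that $(\mu_n^{\mathrm{pen}},u_n-u_*)_H\ge 0$ and this term may be discarded (moved to the favourable side). This monotonicity is exactly what makes the bound uniform in $n$.

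With the penalization term disposed of, the remaining terms are controlled in the standard way. I would use coercivity \eqref{eq1.3} to bound $\int_s^T a(t;u_n-u_*,u_n-u_*)\,dt$ from below by $\Lambda^{-1}\int_s^T|\!|\!|\nabla(u_n-u_*)|\!|\!|_H^2\,dt$; estimate the term $(f_{u_n},u_n-u_*)_H$ by splitting $f_{u_n}=f(\cdot,\cdot,0,0)+(f_{u_n}-f(\cdot,\cdot,0,0))$ and using the Lipschitz bound (A3) to produce $\alpha|u_n-u_*|$ and $\beta\|\sigma\nabla u_n\|$ factors; and then apply Young's inequality to absorb the gradient factors into the coercive term (using $\|\sigma\nabla u_n\|\le\Lambda^{1/2}|\!|\!|\nabla u_n|\!|\!|$ and a triangle inequality $\nabla u_n=\nabla(u_n-u_*)+\nabla u_*$). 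This yields a differential inequality in $s$ to which Gronwall's lemma applies, giving a bound on $\sup_s\|u_n(s)-u_*(s)\|_H^2+\int_0^T|\!|\!|\nabla(u_n-u_*)|\!|\!|_H^2\,dt$ in terms of $\|\varphi-u_*(T)\|_H^2$, $\|f(\cdot,\cdot,0,0)\|_{L^2(0,T;H)}^2$ and $\|u_*\|_{[\WW]^m}$. A final triangle inequality converts this into the stated bound \eqref{eq3.5} on $u_n$ itself, with a constant depending only on $\alpha,\beta,\Lambda$ and $\|u_*\|_{[\WW]^m}$. The main obstacle, and the step deserving the most care, is establishing the correct sign of the penalization pairing uniformly and ensuring the Young-inequality absorption leaves a strictly positive coefficient on the gradient term; the boundedness of $D$ from (D1) is what guarantees $\Pi_{D(\cdot,\cdot)}(u_n)$ contributes no uncontrolled growth.
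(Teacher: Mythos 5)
Your proposal follows essentially the same route as the paper: part (i) via the Lions-type monotone-operator theory after recording that the penalization term is Lipschitz in $y$ uniformly in $(t,x)$ (the paper makes the resulting operator monotone and coercive by the standard $e^{\lambda t}$ substitution before invoking \cite[Chapter 2, Theorem 7.1]{L2}), and part (ii) by testing against $u_n-u_*$ and exploiting that $\langle u_n-\Pi_{D(\cdot,\cdot)}(u_n),u_n-u_*\rangle\ge 0$ because $u_*\in D$ (the paper gets exactly this sign from \eqref{eq2.6}), followed by Young/Poincar\'e and Gronwall. The only blemish is a sign slip in your schematic energy identity: with \eqref{eq3.4} written as $\partial_t u_n+L_tu_n=-f_{u_n}+n(u_n-\Pi_{D(\cdot,\cdot)}(u_n))$, the penalization term enters the right-hand side of the tested identity with a minus sign, which is precisely why the nonnegative pairing may be discarded --- your verbal description of the step (``moved to the favourable side'') is the correct one, but the displayed identity as written would not allow it.
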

\begin{proof}
Since $u$ is a strong solution of (\ref{eq3.4}) if and only if
$\hat u=e^{\lambda t}u$ is a strong solution of (\ref{eq3.4}) with
$L_t$ replaced by $L_t-\lambda$, $\varphi$ replaced by some
$e^{\lambda T}\varphi\in H$ and $f$ replaced by some $\hat f$
still satisfying (A2) and (A3), without loss of generality we may
replace $L_t$ in (\ref{eq3.4}) by $L_t-\lambda$. Let
$\AAA:L^2(0,T;V)\rightarrow L^2(0,T;V')$ be the operator defined
as $(\AAA v)(t)= L_tv(t)-\lambda v(t)
+f_{v(t)}-n(v(t)-\Pi_{D(\cdot,\cdot)}(v(t)))$. By (D1) and
(\ref{eq2.1}) the mapping $(t,x,y)\mapsto -n(y-\Pi_{D(t,x)}(y))$
is continuous and Lipschitz continuous in $y$ for each fixed
$(t,x)\in E_T$. Therefore for sufficiently large $\lambda>0$
(depending on $\alpha$ and $\beta$) the operator $\AAA$ is
bounded, hemicontinuous, monotone and coercive, i.e. satisfies
condition (7.84) from \cite[Chapter 2]{L2}. Therefore the
existence of a unique strong solution $u_n\in[\WW]^m$ of
(\ref{eq3.4}) follows from Theorem 7.1. and Remark 7.12 in
\cite[Chapter 2]{L2}. To prove (ii),  set $\varphi_{*}=u_{*}(T)$,
$f_{*}=-\frac{\partial u_{*}}{\partial t}-L_tu_{*}$. Then
$\varphi_{*}\in H$, $f_{*}\in L^2(0,T;V')$, $u_n-u_{*}\in[\WW]^m$,
$(u_n-u_{*})(T)=\varphi-\varphi^{*}$ and
\[
\frac{\partial(u_n-u_{*})}{\partial t}+L_t(u_n-u_{*})
=-(f_{u_n}-f_{*})+n(u_n-\Pi_{D(\cdot,\cdot)}(u_n)).
\]
Multiplying the above equation by $u_n-u_{*}$ and integrating by
parts we obtain
\begin{align*}
&\int^T_s\big\langle\frac{\partial(u_n-u_{*})}{\partial t}(t)
,u_n(t)-u_{*}(t)\big\rangle_{V',V}\,dt
-\sum^m_{k=1}\int^T_sa(t;u^k_n(t)-u^k_{*}(t),u^k_n(t)-u^k_{*}(t))\,dt\\
&\qquad=-\int^T_s\langle
f_{u_n}(t)-f_{*}(t),u_n(t)-u_{*}(t)\rangle_{V',V}\,dt\\
&\qquad\quad
+n\int^T_s(u_n(t)-\Pi_{D(\cdot,\cdot)}(u_n)(t),u_n(t)-u_{*}(t))_H\,dt.
\end{align*}
By (\ref{eq2.6}),
$(u_n(t)-\Pi_{D(\cdot,\cdot)}(u_n)(t),u_n(t)-u_{*}(t))_H\ge0$.
Therefore from the above equality and (A3) it follows that
\begin{align*}
&\frac12\|(u_n-u_{*})(s)\|^2_H+\sum^m_{k=1}\int^T_s
a(t;u^k_n(t)-u^k_{*}(t),u^k_n(t)-u^k_{*}(t))\,dt\\
&\quad\le\frac12\|\varphi-\varphi_{*}\|^2_H +\int^T_s
\|f_{*}(t)\|_{V'}\cdot\|u_n(t)-u_{*}(t)\|_V\,dt \\
&\qquad+ \int^T_s|(f(t,\cdot,0,0),u_n(t)-u_{*}(t))_H|\,dt\\
&\qquad+\int^T_s(\alpha\|u_n(t)\|_H\cdot\|u_n(t)-u_{*}(t)\|_H
+\beta|\!|\!|\nabla
u_n(t)|\!|\!|^2_H\,\cdot\|u_n(t)-u_{*}(t)\|^2_H)\,dt.
\end{align*}
Using this and standard arguments (we apply Poincar\'e's
inequality and Gronwall's lemma) shows (ii).
\end{proof}

In the proof of our main theorem on existence and approximation we
will use  some additional notation. Let $\hat\BBM=(\hat\BBX,(\hat
P_{z})_{z\in E^1\cup\Delta})$ denote a dual process associated
with the form defined by (\ref{eq2.23}) (see \cite[Theorem
5.1]{O1}). For $\mu\in\MM_{0,b}(E^1)$ let $A^{\mu}$ denote the
additive functional of $\BBM$ associated with $\BBM$ in the sense
of (\ref{eq2.5}), and let  $\hat A^{\mu}$ denote the additive
functional of $\hat\BBM$ associated with $\mu$. Given $\alpha\ge0$
and $\mu\in\MM_{0,b}(E^1)$ we set (whenever the integral exists)
\begin{equation}
\label{eq3.6} R^{0,T}_{\alpha}\mu(s,x)=
E_{s,x}\int^{\zeta_{\tau}}_0e^{-\alpha t}\,dA^{\mu}_t,\qquad \hat
R^{0,T}_{\alpha}\mu(s,x)=\hat
E_{s,x}\int^{\hat\zeta\wedge\tau(0)}_0e^{-\alpha t}\,d\hat
A^{\mu}_t
\end{equation}
for $(s,x)\in E^1$, where $\hat E_{s,x}$ denotes the expectation
with respect to $\hat P_{s,x}$ and $\hat\zeta$ is the life time of
$\hat\BBM$. By $\hat S_{00}(E_{0,T})$ we denote the set of all
$\mu\in\MM_{0,b}(E_{0,T})$ such that $|\mu|$ is a finite order
integral measure on $E^1$ (see \cite{O1} for the definition) and
$\|\hat R^{0,T}_0|\mu|\|_{\infty}<\infty$.

\begin{theorem}
\label{th3.8} Assume that $\varphi,f$ satisfy \mbox{\rm(A1)--(A3)}
and $D$ satisfies \mbox{\rm(D1), (D2)}.
\begin{enumerate}
\item[\rm(i)]There exists a unique solution $(u,\mu)$ of
\mbox{\rm OP$(\varphi,f,D)$}.
\item[\rm(ii)]
$u^i\in C([0,T];H)\cap L^2(0,T;H^1_0(E))$,
$\mu^i\in\MM_{0,b}(E_{0,T})$, $i=1,\dots,m$, and
\begin{equation}
\label{eq3.15} \sup_{0<s\le T}\|u(s)\|^2_H +\int^T_0|\!|\!|\nabla
u(t)|\!|\!|^2_H\,dt \le
C(\|\varphi\|^2_H+\|f(\cdot,\cdot,0,0)\|^2_{L^{2}(0,T;H)})
\end{equation}
with constant $C$ of Proposition \ref{prop3.7}.

\item[\rm(iii)]Let $u_n\in[\WW]^m$ be a solution of \mbox{\rm(\ref{eq3.4})}.
Then
\begin{equation}
\label{eq4.12} \|u_n-u\|^2_{L^{2}(0,T;H)}\rightarrow0, \qquad
\int^T_0|\!|\!|\nabla (u_n-u)(t)|\!|\!|^2_H\,dt \rightarrow0
\end{equation}
and
\begin{equation}
\label{eq4.8} \mu_n\rightharpoonup\mu\quad\mbox{weakly\,${}^*$ on
}(0,T]\times E.
\end{equation}
\end{enumerate}
\end{theorem}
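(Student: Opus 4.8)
The plan is to prove existence and approximation by analyzing the penalized solutions $u_n$ from Proposition \ref{prop3.7} and passing to the limit, using the probabilistic (reflected BSDE) framework of Remark \ref{rem3.5}(ii). First I would fix the penalized equations \mbox{\rm(\ref{eq3.4})} and define $\mu_n=n(u_n-\Pi_{D(\cdot,\cdot)}(u_n))\,dt$, so that $(u_n,\mu_n)$ solves a linear Cauchy problem. By Proposition \ref{prop2.1} and Remark \ref{rem3.5}(ii), each $u_n$ admits a probabilistic representation: setting $Y^n_t=\tilde u_n(\BBX_{t\wedge\zeta_{\tau}})$, $Z^n_t=\sigma\nabla u_n(\BBX_{t\wedge\zeta_{\tau}})$, $K^n_t=A^{\mu_n}_{t\wedge\zeta_{\tau}}$, the triple $(Y^n,Z^n,K^n)$ solves a penalized Markov-type BSDE with penalization term driving $Y^n$ toward $D$. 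The energy bound \mbox{\rm(\ref{eq3.5})} is uniform in $n$, so $\{u_n\}$ is bounded in $L^2(0,T;V)$ and $\{\sup_s\|u_n(s)\|_H\}$ is bounded.

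The heart of the argument is to obtain uniform control of the total variation of $\mu_n$, i.e. a bound on $E_{s,x}\int_0^{\zeta_\tau}d|K^n|$ independent of $n$. This is where condition (D2)/(D3) enters decisively. Using the separating process $Y^*=u_*(\BBX_{\cdot\wedge\zeta_\tau})$, which stays at distance at least $\varepsilon$ from $\partial D$ (Remark \ref{rem3.1}(a)), one tests against $Y^n-Y^*$: by (\ref{eq2.3}), the inward-normal structure of the penalization gives
\[
\langle Y^n_t-Y^*_t,\,dK^n_t\rangle\le-\varepsilon\,d|K^n|_t,
\]
since $dK^n_t$ points along an inward normal direction at $\Pi_{D_t}(Y^n_t)$ with intensity $|dK^n_t|$, and $Y^*_t$ lies $\varepsilon$-deep inside $D_t$. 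Applying It\^o's formula to $|Y^n_t-Y^*_t|^2$ and combining the penalization sign with the Lipschitz bound (A3) and the uniform energy estimate then yields $\varepsilon\,E_{s,x}\int_0^{\zeta_\tau}d|K^n|\le C$ with $C$ independent of $n$. This uniform variation bound, together with the uniform $\HH^2$-bounds on $Y^n$ and $Z^n$, is the main obstacle and the technical crux of the whole theorem.

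With these estimates in hand I would pass to the limit. The uniform energy bound gives weak convergence of a subsequence $u_n\rightharpoonup u$ in $L^2(0,T;V)$; to upgrade this to the strong convergence \mbox{\rm(\ref{eq4.12})} I would apply It\^o's formula to $|Y^n_t-Y^k_t|^2$ for two indices $n,k$, exploiting the monotonicity of the penalization (the cross terms $\langle Y^n-\Pi_{D}(Y^n)-(Y^k-\Pi_{D}(Y^k)),\,Y^n-Y^k\rangle\ge0$ by convexity) to show that $\{(Y^n,Z^n)\}$ is Cauchy in $\HH^2(P_{s,x})$ for q.e.\ $(s,x)$, hence $Z^n=\sigma\nabla u_n$ converges strongly, giving the gradient convergence in (\ref{eq4.12}). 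The uniform variation bound makes $\{K^n\}$ relatively compact, so $K^n$ converges to some finite-variation $K$, defining the limit measure $\mu\in\MM_{0,b}(E_{0,T})$ and yielding the weak-${}^*$ convergence (\ref{eq4.8}).

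It remains to verify that the limit $(u,\mu)$ is genuinely a probabilistic solution in the sense of Definition \ref{def3.4}. Passing to the limit in the penalized BSDE gives condition (b) of Definition \ref{def3.4}, i.e.\ (\ref{eq3.2}). Since $\dist(Y^n_t,D_t)=n^{-1}|dK^n_t/dt|\to0$ uniformly by the variation bound, the limit satisfies $Y_t\in D_t$, giving the obstacle constraint $u(t,x)\in D(t,x)$ q.e. For the minimality condition (c), I would use the standard penalization identity: for quasi-continuous $h$ with $h\in D$ one has $\langle u_n-\Pi_{D}(u_n),\,\Pi_{D}(u_n)-h\rangle\ge0$ (since $\Pi_D(u_n)$ is the projection and $h\in D$), whence $\int_0^{\zeta_\tau}\langle Y^n_t-h(\BBX_t),dK^n_t\rangle\le\int_0^{\zeta_\tau}\langle Y^n_t-\Pi_{D_t}(Y^n_t),dK^n_t\rangle=n^{-1}\int_0^{\zeta_\tau}\|dK^n_t\|^2/\,dt\ge0$ controls the wrong-sign part, and passing to the limit yields (\ref{eq3.19}). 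Finiteness $f^i_u\in L^1(E_{0,T})$ follows from (A2), (A3) and the energy bound; the regularity $u^i\in C([0,T];H)\cap L^2(0,T;H^1_0(E))$ and estimate (\ref{eq3.15}) follow from (\ref{eq3.5}), Remark \ref{rem3.5}(iii) and lower semicontinuity of the norm under weak convergence. Uniqueness is already established in the preceding proposition, which completes (i)--(iii).
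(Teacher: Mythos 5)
Your overall strategy coincides with the paper's up to and including the key a priori estimate: penalize, represent $u_n$ probabilistically via Proposition \ref{prop2.1}, and use the separating process $Y^{*}$ from (D2)/(D3) together with (\ref{eq2.3}) to get $\langle Y^n_t-Y^{*}_t,dK^n_t\rangle\le-\varepsilon\,d|K^n|_t$ and hence the uniform bound on $E_{s,x}|K^n|_{\zeta_{\tau}}$ and on $|\mu_n|(E_{0,T})$. That part is exactly the paper's argument. The divergence, and the first genuine gap, is in how you pass to the limit. You claim that $\{(Y^n,Z^n)\}$ is Cauchy in $\HH^2(P_{s,x})$ because of ``monotonicity of the penalization,'' citing $\langle (Y^n-\Pi_{D}(Y^n))-(Y^k-\Pi_{D}(Y^k)),\,Y^n-Y^k\rangle\ge0$. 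That inequality is true for a fixed convex set, but it is not the cross term that appears in It\^o's formula: the penalization enters as $dK^n_t=-n(Y^n_t-\Pi_{D_t}(Y^n_t))\,dt$, so the relevant term is $-\langle n(Y^n_t-\Pi_{D_t}(Y^n_t))-k(Y^k_t-\Pi_{D_t}(Y^k_t)),\,Y^n_t-Y^k_t\rangle\,dt$ with $n\ne k$, and this has no sign. The standard way around this (estimating $E\int \dist(Y^n_t,D_t)\,d|K^k|_t$ and showing it vanishes) requires substantially more work; the paper does not do it by hand but instead verifies hypotheses (H1)--(H4) of \cite{KRS} under $P_{s,x}$ for q.e. $(s,x)$ and invokes \cite[Theorem 3.6]{KRS} for existence of the reflected BSDE solution and \cite[Theorem 4.9]{KRS} for the convergence (\ref{eq3.14}) of the penalized triples. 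Without that external input (or a correct replacement for your monotonicity step) the limit passage does not go through. A related imprecision: the variation bound gives $nE_{s,x}\int\dist(Y^n_t,D_t)\,dt\le C$, hence $L^1$-convergence of the distance to zero, not the ``uniform'' convergence you assert; and your sign computation for the minimality condition is off ($\langle Y^n_t-\Pi_{D_t}(Y^n_t),dK^n_t\rangle=-n^{-1}|dK^n_t/dt|^2dt\le0$, not $\ge0$), although the intended conclusion $\int\langle Y^n_t-h(\BBX_t),dK^n_t\rangle\le0$ is correct.

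The second gap is that you never establish that the limit measure $\mu$ is smooth, i.e. charges no set of capacity zero, which is part of the requirement $\mu^i\in\MM_{0,b}(E_{0,T})$ in Definition \ref{def3.4}(a); weak-${}^*$ convergence of $\mu_n$ only yields a bounded Borel measure. The paper proves smoothness by a separate argument: it identifies $K=A^{\mu}$ through the Revuz-type correspondence (\ref{eq2.5}), and then uses strict positivity of $\varrho(s;t,y)=\int_E p_E(s,y,t,x)\,dx$ together with (\ref{eq2.2}) to show $\mu^i(H)=0$ for every compact subset $H$ of a capacity-zero set. You also pass over the construction of versions $Y,K$ independent of $(s,x)$ (the paper uses \cite[Lemma A.3.4]{FOT}) and the identification $Z^{s,x}=\sigma\nabla u(\BBX)$, which the paper obtains via a Banach--Saks/Ces\`aro-mean argument; these are needed to conclude that the limit pair $(u,\mu)$, rather than merely a family of BSDE solutions, satisfies Definition \ref{def3.4}.
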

\begin{proof}
Let $u_n\in[\WW]^m$ be a quasi-continuous version of a strong
solution of (\ref{eq3.4}). Since
$-f_{u_n}+n(u_n-\Pi_{D(\cdot,\cdot)}(u_n))\in L^2(0,T;H)$, it
follows from Proposition \ref{prop2.1} (and remark following it)
that for q.e. $(s,x)\in E_{0,T}$ the pair
\[
(Y^n_t,Z^n_t)=(u_n(\BBX_{t\wedge\zeta_{\tau}}),\sigma\nabla
u_n(\BBX_{t\wedge\zeta_{\tau}})),\quad t\in[0,T]
\]
is a solution of the  BSDE
\begin{align}
\label{eq4.1}
u_n(\BBX_{t\wedge\zeta_{\tau}})&=\varphi(\BBX_{\zeta_{\tau}})
+\int^{\zeta_{\tau}}_{t\wedge\zeta_{\tau}}
(f(\BBX_{\theta},u_n(\BBX_{\theta}),\sigma\nabla
u_n(\BBX_{\theta}))\,d\theta \nonumber\\
&\quad +\int^{\zeta_{\tau}}_{t\wedge\zeta_{\tau}}K^n_{\theta}
-\int^{\zeta_{\tau}}_{t\wedge\zeta_{\tau}}\sigma\nabla
u_n(\BBX_{\theta})\,dB_{\theta} ,\quad t\in[0,T],\quad
P_{s,x}\mbox{-a.s.},
\end{align}
where
\[
K^n_t=-n\int^{\zeta_{\tau}}_{t\wedge\zeta_{\tau}}(u_n(\BBX_{\theta})
-\Pi_{D(\BBX_{\theta})}(u_n(\BBX_{\theta})))\,d\theta.
\]
Set
\[
(Y^{*}_t,Z^{*}_t)=(u_{*}(\BBX_{t\wedge\zeta_{\tau}}),\sigma\nabla
u_{*}(\BBX_{t\wedge\zeta_{\tau}})),\quad t\in[0,T],
\]
where $u_{*}$ denotes a quasi-continuous version of the function
from condition (D2). By Remark \ref{rem3.1}, $Y^{*}$ satisfies
(D3). By It\^o's formula,
\begin{align*}
&E_{s,x}\Big(|Y^n_t-Y^{*}_t|^2
+\int^{\zeta_{\tau}}_t\|Z^n_{\theta}-\sigma\nabla
u_{*}(\BBX_{\theta})\|^2\,d\theta\Big)
\le E_{s,x}|(\varphi-\varphi_{*})(\BBX_{\zeta_{\tau}})|^2\nonumber\\
&\quad+2E_{s,x}\Big(\int^{\zeta_{\tau}}_t\langle
Y^n_{\theta}-Y^{*}_{\theta},
(f_{u_n}-f_{*})(\BBX_{\theta})\rangle\,d\theta
+\int^{\zeta_{\tau}}_t\langle Y^n_{\theta}-Y^{*}_{\theta},
dK^n_{\theta}\rangle\Big).
\end{align*}
Since by (\ref{eq2.3}) and (D3),
\[
\int^{\zeta_{\tau}}_t\langle Y^n_{\theta}-Y^{*}_{\theta},
dK^n_{\theta}\rangle \le
-\int^{\zeta_{\tau}}_t\mbox{dist}(Y^{*}_{\theta},\partial
D(\BBX_{\theta}))\,d|K^n|_{\theta}
\le-\varepsilon\int^{\zeta_{\tau}}_td|K^n|_{\theta},
\]
we have
\begin{align*}
&E_{s,x}\Big(|Y^n_t-Y^{*}_t|^2
+\int^{\zeta_{\tau}}_t\|Z^n_{\theta}-\sigma\nabla
u_{*}(\BBX_{\theta})\|^2\,d\theta
+\varepsilon\int^{\zeta_{\tau}}_td|K^n|_{\theta}\Big)\\
&\quad\le E_{s,x}|(\varphi-\varphi_{*})(\BBX_{\zeta_{\tau}})|^2 +
CE_{s,x}\Big(\int^{\zeta_{\tau}}_t
\{|Y^n_{\theta}-Y^{*}_{\theta}|^2+
|Y^n_{\theta}-Y^{*}_{\theta}|\cdot|Y^{*}_{\theta}|\\
&\qquad+|Y^n_{\theta}-Y^{*}_{\theta}|\cdot(
\|Z^n_{\theta}-Z^{*}_{\theta}\| +\|Z^{*}_{\theta}\|)
+|Y^n_{\theta}-Y^{*}_{\theta}|\cdot
|f(\BBX_{\theta},0,0)-f_{*}(\BBX_{\theta})|\}\,d\theta \Big).
\end{align*}
Using standard arguments one can deduce from this that
\begin{align}
\label{eq3.8} &E_{s,x}\Big(\sup_{0\le t\le T}
|Y^n_t-Y^{*}_t|^2+\int^{\zeta_{\tau}}_0\|Z^n_t-Z^{*}_t\|^2\,dt
+\varepsilon E_{s,x}|K^n|_{\zeta_{\tau}}\Big) \nonumber\\
&\qquad\le C
E_{s,x}\Big(|(\varphi-\varphi_{*})(\BBX_{\zeta_{\tau}})|^2\nonumber\\
&\qquad\quad+\int^{\zeta_{\tau}}_0(|f(\BBX_t,0,0)-f_{*}(\BBX_t)|^2
+|Y^{*}_t|^2 +\|Z^{*}_t\|^2)\,dt\Big).
\end{align}
Set $\mu_n=(\mu_n^1,\dots,\mu^m_n)$, where
\[
d\mu^i_n(t,y)=-n(u^i_n(t,y)-(\Pi_{D(t,y)}(u_n(t,y)))^i)\,dt\,dy.
\]
Then
\[
d|\mu^i_n|(t,y)=n|u^i_n(t,y)-(\Pi_{D(t,y)}(u_n(t,y)))^i|\,dt\,dy,
\quad |\mu_n|=\sum^m_{i=1}|\mu^i_n|.
\]
Since $u_{*}\in L^{2}(0,T;V)$, it follows from (\ref{eq3.8}) and
\cite[Proposition 3.13]{K:JFA} that
\begin{align}
\label{eq3.9} \varepsilon\sup_{n\ge1}|\mu_n|(E_{0,T})&\le
C\Big(\|\varphi-\varphi_{*}\|^2_H
+\|f(\cdot,\cdot,0,0)-f_{*}\|^2_{L^{2}(0,T;H)}\nonumber\\
&\qquad +\|u_{*}\|^2_{L^{2}(0,T;H)} +\int^T_0|\!|\!|\nabla
u_{*}(t)|\!|\!|^2_H\,dt\Big).
\end{align}
Set
\[
\xi=\varphi(\BBX_{\zeta_{\tau}}),\quad \bar
f(t,x,y)=f(\BBX_{t\wedge\zeta_{\tau}},y,z),\quad
D_t=D(\BBX_{t\wedge\zeta_{\tau}}),\quad t\in[0,T].
\]
We now show that  from (A1)--(A3), (D1), (D2) it follows that for
q.e. $(s,x)\in E_{0,T}$, under the measure $P_{s,x}$ the data
$\xi,\bar f$, $\DD=\{D_t,t\in[0,T]\}$ satisfy the following
hypotheses (H1)--(H4) from \cite{KRS}. In the notation of the
present paper, for fixed probability measure $P_{s,x}$,  these
hypotheses read as follows:
\begin{enumerate}
\item[(H1)]$\xi\in D_T$, $E_{s,x}|\xi|^2<\infty$,

\item[(H2)]$E_{s,x}\int_0^T|\bar f(t,0,0)|^2\,dt<\infty$,

\item[(H3)]$\bar f:[0,T]\times\Omega\times\BR^m\times\BR^{m\times d}
\rightarrow\BR^m$ is measurable with respect to
$Prog\otimes\BB(\BR^m)\otimes\BB^{m\times d}$ ($Prog$ denotes the
$\sigma$-field of all progressive subsets of $[0,T]\times\Omega$)
and there are exists $\alpha,\beta\ge0$ such that $P_{s,x}$-a.s.
we have
\[
|\bar f(t,y_1,z_1)-\bar f(t,y_2,z_2)|
\le\alpha|y_1-y_2|+\beta\|z_1-z_2\|
\]
for all $y_1,y_2\in\BR^m$ and $z_1,z_2\in\BR^{m\times d}$,

\item[(H4)]for each $N$ the mapping
$t\mapsto D_t\cap\{x\in\BR^d:|x|\le N\}\in\mbox{Conv}$ is
c\`adl\`ag $P_{s,x}$-a.s. (with the convention that $D_T=D_{T-}$),
and there is a semimartingale $A\in\HH^2(P_{s,x})$ such that
$A_t\in\mbox{Int} D_t$, $t\in[0,T]$, and $\inf_{t\le T}
\mbox{dist}(A_t,\partial D_t)>0$.
\end{enumerate}
Clearly (A1) implies that $\xi$ satisfies (H1), and (A3) implies
that $\bar f$ satisfies (H3) under $P_{s,x}$ for $(s,x)\in E_T$.
To prove (H2), let us set $g(s,x)=E_{s,x}\int^T_0|\bar
f(t,0,0)|^2\,dt$  and $B=\{(s,x)\in E_{0,T}:g(s,x)=\infty\}$. For
every $\nu\in\hat S_{00}(E_{0,T})$ we have
\begin{equation}
\label{eq3.12} \int_{E_{0,T}}g\,d\nu
=E_{\nu}\int^{\zeta_{\tau}}_0|f(\BBX_t,0,0)|^2\,dt =\int_{E_{0,T}}
R^{0,T}_0|f|^2(\cdot,\cdot,0,0)\,d\nu.
\end{equation}
Since
\[
\int_{E_{0,T}} R^{0,T}_0|f|^2(\cdot,\cdot,0,0)\,d\nu
=(|f|^2(\cdot,\cdot,0,0),\hat R^{0,T}_0\nu)_{L^2(0,T;L^2(E))}
\]
(see \cite[p. 1226]{K:JFA}), it follows from (\ref{eq3.12}) that
\[
\int_{E_{0,T}}g\,d\nu \le\|\hat
R^{0,T}_0\nu\|_{\infty}\cdot\||f|^2(\cdot,\cdot,0,0)\|_{L^1(0,T;L^1(E))}.
\]
Therefore if (A2) is satisfied then $\nu(B)=0$ for every
$\nu\in\hat S_{00}(E_{0,T})$. By this and a known analogue of
\cite[Theorem 2.2.3]{FOT}, $\widehat{\mbox{cap}}(B)=0$, where
$\widehat{\mbox{cap}}$ denotes the dual capacity (see \cite[p.
292]{O1}). Consequently, $\mbox{cap}(B)=0$, because
$\mbox{cap}(B)=\widehat{\mbox{cap}}(B)$ (see \cite[p. 292]{O1}).
This shows that if (A2) is satisfied then (H2) holds true under
$P_{s,x}$ for q.e. $(s,x)\in E_{0,T}$. That $\DD$ satisfies (H4)
under $P_{s,x}$ for q.e. $(s,x)\in E_{0,T}$ follows from (D1),
(D2) and Remark \ref{rem3.1}. Since hypotheses (H1)--(H4) are
satisfied under $P_{s,x}$ for q.e. $(s,x)\in E_{0,T}$, it follows
from  \cite[Theorem 3.6]{KRS} that for q.e. $(s,x)\in E_{0,T}$
there exists a unique solution ($Y^{s,x},Z^{s,x},K^{s,x})$ of the
reflected BSDE with final condition $\xi$, coefficient $\bar f$
and obstacle $\DD$. This means that  $Y^{s,x}_t\in D_t$ for
$t\in[s,T]$,
\begin{align}
\label{eq4.14} Y^{s,x}_t&=\varphi(\BBX_{\zeta_{\tau}})
+\int^{\zeta_{\tau}}_{t\wedge\zeta_{\tau}}
f(\BBX_{\theta},Y^{s,x}_{\theta},Z^{s,x}_{\theta})\,d\theta
\nonumber \\
&\quad +\int^{\zeta_{\tau}}_{t\wedge\zeta_{\tau}}K^{s,x}_{\theta}
-\int^{\zeta_{\tau}}_{t\wedge\zeta_{\tau}}
Z^{s,x}_{\theta}\,dB_{\theta} ,\quad t\in[0,T],\quad
P_{s,x}\mbox{-a.s.}
\end{align}
and for  every quasi-continuous  $h=(h^1,\dots,h^m)$ such that
$h(t,x)\in D(t,x)$ for q.e. $(t,x)\in E_{0,T}$ we have
\begin{equation}
\label{eq4.15} \int^{\zeta_{\tau}}_0\langle
Y^{s,x}_t-h(\BBX_t),dK^{s,x}_t\rangle\le0
\end{equation}
(observe that $h(\BBX_{\cdot\wedge\zeta_{\tau}})\in D_t$,
$t\in[0,T]$, $P_{s,x}$-a.s. by Remark \ref{rem3.5}(i)). By
\cite[Theorem 4.9]{KRS}, for any $(s,x)$ for which (\ref{eq4.1})
is satisfied for all $n\in\BN$, i.e. for  q.e. $(s,x)\in E_{0,T}$,
\begin{equation}
\label{eq3.14} \sup_{0\le t\le
T}|Y^n_t-Y^{s,x}_t|\rightarrow0,\quad
\int^T_0\|Z^n_t-Z^{s,x}_t\|^2\,dt\rightarrow0,\quad\sup_{0\le t\le
T}|K^n_t-K^{s,x}_t|\rightarrow0
\end{equation}
in probability $P_{s,x}$ as $n\rightarrow\infty$. By
(\ref{eq3.14}) and \cite[Lemma A.3.4]{FOT}  one can find versions
$Y,K$ of $Y^{s,x},K^{s,x}$ not depending on $s,x$ such that for
q.e. $(s,x)\in E_{0,T}$,
\begin{equation}
\label{eq4.5} \sup_{0\le t\le
T}|Y^n_t-Y_t|\rightarrow0,\quad\sup_{0\le t\le T}
|K^n_t-K_t|\rightarrow0
\end{equation}
in probability $P_{s,x}$.
From (\ref{eq3.8}) and (\ref{eq4.5}) it follows in particular that
for q.e. $(s,x)\in E_{0,T}$,
\[
E_{s,x}|u_n(s,x)-Y_0|=E_{s,x}|u_n(\BBX_0)-Y_0|\rightarrow0,
\]
i.e. $\{u_n\}$ is converges q.e. in $E_{0,T}$. Let us put
$u(s,x)=\lim_{n\rightarrow\infty}u_n(s,x)$ for those $(s,x)\in
E_{0,T}$ for which the limit exists and is finite, and $u(s,x)=0$
otherwise. Then there is an $m_1$-version of $u$ (still denoted by
$u$) such that $u$ is quasi-continuous and
\begin{equation}
\label{eq4.13} Y_t=u(\BBX_{t\wedge\zeta_{\tau}}),\quad
t\in[0,T],\quad P_{s,x}\mbox{-a.s.}
\end{equation}
for q.e. $(s,x)\in E_{0,T}$. Indeed, by (\ref{eq4.5}), $\sup_{0\le
t\le T} |u_n(\BBX_{t\wedge\zeta_{\tau}})-Y_t|\rightarrow0$ in
probability $P_{s,x}$ for q.e. $(s,x)\in E_{0,T}$. On the other
hand, since the set for which $\{u_n(s,x)\}$ does not converge is
$\BBM$-exceptional, it follows from Remark \ref{rem2.1}(ii) that
$\sup_{0\le t\le T}
|u_n(\BBX_{t\wedge\zeta_{\tau}})-u(\BBX_{t\wedge\zeta_{\tau}})|
\rightarrow0$ $P_{s,x}$-a.s. for q.e. $(s,x)\in E_{0,T}$. This
shows (\ref{eq4.13}). In particular, since $Y$ is continuous,
$[0,T]\ni t\mapsto u(\BBX_{t\wedge\zeta_{\tau}})$ is continuous
under $P_{s,x}$ for q.e. $(s,x)\in E_{0,T}$, from which it follows
that $u$ has a quasi-continuous $m_1$-version.
Since $u_n\rightarrow u$ $m_1$-a.e.  and $u_n(s,\cdot)\rightarrow
u(s,\cdot)$ $m$-a.e. for every $s\in(0,T]$ (see Remark
\ref{rem2.1}(i)), it follows from (\ref{eq3.5}) that the first
convergence in (\ref{eq4.12}) holds true,
\begin{equation}
\label{eq3.21} \|u_n(s)-u(s)\|_H\rightarrow0
\end{equation}
for every $s\in(0,T]$ and (\ref{eq3.15}) is satisfied. Moreover,
$\nabla u_n\rightarrow\nabla u$ weakly in $L^{2}(0,T;H)$ as
$n\rightarrow\infty$. By the Banach-Saks theorem there is a
subsequence $(n_k)$ such that the sequence of the Ces\`aro mean
$\bar u_N=\frac1{N}\sum^N_{k=1}u_{n_k}$ converges strongly, i.e.
\begin{equation}
\label{eq4.6} \int^T_0|\!|\!|\nabla (\bar u_N-u)(t)|\!|\!|^2_H\,dt
\rightarrow0
\end{equation}
as $N\rightarrow\infty$. For every $\nu\in\hat S_{00}(E_{0,T})$,
\begin{align}
\label{eq4.7} E_{\nu}\int^T_0\|\nabla(\bar u_N-u) (\BBX_t)\|^2\,dt
\le \|\hat R^{0,T}_0\nu\|_{\infty}\int^T_0|\!|\!|\nabla(\bar
u_N-u)(t)|\!|\!|^2_H\,dt.
\end{align}
From (\ref{eq4.6}), (\ref{eq4.7}) and a known analogue of
\cite[Theorem 2.2.3]{FOT} we conclude  that
$E_{s,x}\int^T_0\|\sigma\nabla(\bar u_N-u)
(\BBX_t)\|^2\,dt\rightarrow0$ for q.e. $(s,x)\in E_{0,T}$,
On the other hand, by (\ref{eq3.14}),
$\int^T_0\|\frac1{N}\sum^N_{k=1}Z^{n_k}_t-Z^{s,x}_t\|^2\,dt\rightarrow0$
in probability $P_{s,x}$ for q.e. $(s,x)\in E_{0,T}$.
Consequently,
\begin{equation}
\label{eq4.16} \int^T_0\|Z^{s,x}_t-\sigma\nabla
u(\BBX_t)\|^2\,dt=0,\quad P_{s,x}\mbox{-a.s.}
\end{equation}
for q.e. $(s,x)\in E_{0,T}$. From this and  (\ref{eq3.14}) we
deduce that (\ref{eq4.6}) holds with $\bar u_N$ replaced by $u_n$,
i.e. the second part of (\ref{eq4.12}) is satisfied. By
(\ref{eq3.9}) there is a subsequence (still denoted by $n$), and a
bounded measure $\mu$ on $(0,T]\times E$ such that (\ref{eq4.8})
is satisfied. By (\ref{eq2.5}), for any $f\in C_c((s,T)\times E$
and $i=1,\dots,m$ we have
\[
E_{s,x}\int^{\zeta_{\tau}}_0f(\BBX_t)\,dK^{n,i}_t
=\int^T_s\!\!\int_Ef(t,y)p_E(s,x,t,y)\,d\mu^i_n(t,y)
\]
for q.e. $(s,x)\in E_{0,T}$. Letting $n\rightarrow\infty$ and
using (\ref{eq4.5}), (\ref{eq4.8}) and the fact that  $(s,T)\times
E\ni(t,y)\rightarrow p_E(s,x,t,y)$ is continuous we obtain
\begin{equation}
\label{eq4.9} E_{s,x}\int^{\zeta_{\tau}}_0f(\BBX_t)\,dK^{i}_t
=\int^T_s\!\!\int_Ef(t,y)p_E(s,x,t,y)\,d\mu^i(t,y),\quad
i=1,\dots,m.
\end{equation}
Hence $K=A^{\mu}$ (see (\ref{eq2.5})). Thus $(Y,Z,A^{\mu})$ is a
version of $(Y^{s,x},Z^{s,x},K^{s,x})$ for q.e. $(s,x)\in
E_{0,T}$. In particular, by (\ref{eq4.14}) and (\ref{eq4.15}),
\begin{align*}
Y_t&=\varphi(\BBX_{\zeta_{\tau}})
+\int^{\zeta_{\tau}}_{t\wedge\zeta_{\tau}}
f(\BBX_{\theta},Y_{\theta},Z_{\theta})\,d\theta
\nonumber \\
&\quad +\int^{\zeta_{\tau}}_{t\wedge\zeta_{\tau}}A^{\mu}_{\theta}
-\int^{\zeta_{\tau}}_{t\wedge\zeta_{\tau}}
Z_{\theta}\,dB_{\theta},\quad t\in[0,T],\quad P_{s,x}\mbox{-a.s.}
\end{align*}
and
\[
\int^{\zeta_{\tau}}_0\langle Y_t-h(\BBX_t),dA^{\mu}_t\rangle\le0
\]
for q.e. $(s,x)\in E_{0,T}$. When combined  with (\ref{eq4.13})
and (\ref{eq4.16}) this proves that $(u,\mu)$ satisfies
(\ref{eq3.2}), (\ref{eq3.19}). What is left is to show that
$\mu\in S(E_{0,T})$. To prove this, for fixed $s\in(0,T)$ set
$\varrho(s;t,y)=\int_Ep_E(s,y,t,x)\,dx$. Observe that $(s,T]\times
E\ni(t,y)\mapsto v(s,y):=\varrho(s;t,y)$ is a weak solution of the
Cauchy-Dirichlet problem $(\frac{\partial}{\partial t}-L_t)v=0$ in
$(s,T]\times E$, $v|_{(s,T]\times\partial E}=0$, $v(s)=1$ (see
\cite[Section 8]{A}).
In particular, $v$ is strictly positive and continuous on
$(s,T]\times E$. Let $B$ be  Borel subset $B$ of $(s,T]\times E$
such that $\mbox{cap}(B)=0$, and let $H$ be a compact subset of
$B$. By an approximation argument, (\ref{eq4.9}) holds true for
every bounded Borel function $f$ on $(s,T]\times E$. In
particular, taking $f=\fch_H$ in (\ref{eq4.9}) we get
\begin{align*}
\int_H\varrho(s;t,y)\,d\mu^i(t,y)&=
\int_E\Big(\int_Hp_E(s,x,t,y)\,d\mu^i(t,y)\Big)\,dx\\
&= \int_E\Big(E_{s,x}\int^{\zeta_{\tau}}_0
\fch_H(\BBX_t)\,dK^{i}_t\Big)\,dx=0,
\end{align*}
the last equality being a consequence of (\ref{eq2.2}). Since
$\inf_{(t,y)\in H} \varrho(s;t,y)>0$,
it follows that $\mu^i(H)=0$. Since $s\in(0,T)$ was arbitrary,
$\mu^i(H)=0$ for every compact subset $H$ of $E_{0,T}$. This and
\cite[(4.4)]{O1}) imply that $\mu(B)=0$. Thus $\mu$ charges no set
of capacity zero. Hence $\mu\in\MM_{0,b}(E_{0,T})$, which
completes the proof.
\end{proof}

It is perhaps worth remarking that in case the operator $L$ is in
nondivergence form (for instance, $L$ is the Laplace operator
$\Delta$, or, more generally, $\frac{\partial a_{ij}}{\partial
x_k}\in L^{\infty}(0,T)\times E))$ for $i,j,k=1,\dots,d$), then
the process $\BBX$ corresponding to $L$ can be constructed by
solving an It\^o equation. This allows simplifying some arguments
in the proofs of the results presented above,  but actually not
much. One reason is that we are working in $L^2$ setting, so even
in the case where $L=\Delta$ the solution of (\ref{eq1.1}),
(\ref{eq1.2}) need not be continuous. On the other hand, since we
use a stochastic approach via BSDEs (the basic relation is
$Y=u(\BBX)$, where $Y$ is the first component of the solution of
the corresponding BSDE; see Remark \ref{rem3.5}(ii)), we must know
that $Y$ is continuous, and hence that $u$ is quasi-continuous.
This in turn requires the introduction of quasi-notions (capacity,
quasi-continuity, etc.) and we still have to use some results from
the probabilistic potential theory.

\section{Variational  solutions}
\label{sec4}

In this section we show that results of Section \ref{sec3} can be
translated into results on solutions of (\ref{eq1.1}),
(\ref{eq1.2}) in the sense of the analytical definition formulated
below. Solutions in the sense of this definition will be called
variational solutions.
\begin{definition}
\label{def4.1} We say that a pair $(u,\mu)$, where
$u=(u^1,\dots,u^m):E_T\rightarrow\BR^m$ and
$\mu=(\mu^1,\dots\mu^m)$ is a signed Borel measure on $E_{0,T}$ is
a variational solution of OP$(\varphi,f,D)$ if
\begin{enumerate}
\item[(a)] $u$ is quasi-continuous,
$u^i\in C([0,T];H)\cap L^2(0,T;H^1_0(E))$,
$\mu^i\in\MM_{0,b}(E_{0,T})$, $i=1,\dots,m$,
\item[(b)]
for every $\eta=(\eta^1,\dots,\eta^m)$ such that $\eta^i$ is
bounded and $\eta^i\in\WW_0$ for $i=1,\dots,m$ we have
\begin{align}
\label{eq3.17}
&(u(t),\eta(t))_H+\int^T_t\big\langle\frac{\partial\eta}{\partial
s}(s),u(s)\big\rangle_{V',V}\,ds +\int^T_ta^{(s)}(u(s),\eta(s))\,ds\nonumber \\
&\qquad=(\varphi,\eta(T))_H +\int^T_t(f_u(s),\eta(s))_H\,ds
+\int^T_t\!\!\int_E\langle\eta,d\mu\rangle
\end{align}
for all $t\in[0,T]$, where $\{a^{(t)}(\cdot,\cdot), t\ge0\}$ is
the family of bilinear forms on $V\times V$ defined as
\[
a^{(t)}(\varphi,\psi)=\frac12\sum^m_{k=1}\sum^{d}_{i,j=1}\int_Ea_{ij}(t,x)
\frac{\partial\varphi^k}{\partial
x_i}\frac{\partial\psi^k}{\partial x_j}\,dx.
\]

\item[(c)]$u(t,x)\in D(t,x)$ for q.e. $(t,x)\in E_{0,T}$, and for
every quasi-continuous function $h=(h^1,\dots,h^m)$ such that
$h(t,x)\in D(t,x)$ for q.e. $(t,x)\in E_{0,T}$,
\begin{equation}
\label{eq3.18} \int^T_t\!\!\int_E\langle u-h,d\mu\rangle\le0
\end{equation}
for all $t\in(0,T)$.
\end{enumerate}
\end{definition}

\begin{remark}
If condition (b) of Definition \ref{def4.1} is satisfied then for
any bounded $v\in\WW_0$ we have
\begin{equation}
\label{eq4.4} \EE^{0,T}(u^i,\eta)=(\varphi^i,\eta(T))_H
+\int^T_0(f^i_u(s),\eta(s))_H\,ds
+\int_{E_{0,T}}\eta\,d\mu^i,\quad i=1,\dots,m.
\end{equation}
To see this it suffices to take $t=0$ in (\ref{eq3.17}) and
consider $\eta$ such that $\eta^i=v$, $\eta^j=0$ for $j\neq i$.
Also note that using a standard  argument (see, e.g., the
reasoning following (1.16) in \cite[Chapter III]{LSU}) and the
fact that $\mu(\{t\}\times E)=0$ for $t\in[0,T]$ (see Remark
\ref{rem3.5}(iv)) one can show that (\ref{eq4.4}) implies
(\ref{eq3.17}).
\end{remark}

\begin{remark}
Assume that $m=1$ and $D(t,x)=\{y\in\BR:\underline h(t,x)\le y\le
\bar h(t,x)\}$ for some quasi-continuous $\underline h,\bar
h:E_T\rightarrow\BR$ such that $\underline h<\bar h$. Then
(\ref{eq3.18}) reduces to the condition $\int_{E_{0,T}}
(u-\underline h)\,d\mu^+=\int_{E_{0,T}}(\bar h-u)\,d\mu^-=0$,
where $\mu^+$ (resp. $\mu^-$) is the positive (resp. negative)
part of the Jordan decomposition of $\mu$. The case of merely
measurable obstacles is discussed in \cite{K:SPA,K:PA,KR:JEE}.
\end{remark}

\begin{proposition}
Assume \mbox{\rm(A3), (D1)}. Then there exists at most one
variational solution of \mbox{\rm OP$(\varphi,f,D)$}.
\end{proposition}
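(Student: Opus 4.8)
The plan is to prove uniqueness by a direct energy argument on the difference of two solutions, using the minimality condition (\ref{eq3.18}) to control the reaction measures. Suppose $(u_1,\mu_1)$ and $(u_2,\mu_2)$ are two variational solutions and set $w=u_1-u_2$, $\nu=\mu_1-\mu_2$. By the remark following Definition \ref{def4.1}, each component satisfies the weak formulation (\ref{eq4.4}), so $w$ is a weak solution of the terminal-value problem $\frac{\partial w}{\partial t}+L_tw=-(f_{u_1}-f_{u_2})-\nu$ with $w(T)=0$. Since both $u_i$ are quasi-continuous and $\mu_i\in\MM_{0,b}(E_{0,T})$, I would first derive the energy identity
\[
\frac12\|w(t)\|^2_H+\int^T_ta^{(s)}(w(s),w(s))\,ds =\int^T_t(f_{u_1}(s)-f_{u_2}(s),w(s))_H\,ds +\int^T_t\!\!\int_E\langle w,d\nu\rangle
\]
valid for every $t\in[0,T]$.

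Granting this identity, the argument closes quickly. The obstacle term has the right sign: applying (\ref{eq3.18}) to $(u_1,\mu_1)$ with the admissible test function $h=u_2$ gives $\int^T_t\!\int_E\langle u_1-u_2,d\mu_1\rangle\le0$, and applying it to $(u_2,\mu_2)$ with $h=u_1$ gives $\int^T_t\!\int_E\langle u_2-u_1,d\mu_2\rangle\le0$; adding the two yields $\int^T_t\!\int_E\langle w,d\nu\rangle\le0$ (both choices are legitimate because $u_1,u_2$ are quasi-continuous and take values in $D$ q.e.). By (\ref{eq1.3}) one has $a^{(s)}(w,w)\ge(2\Lambda)^{-1}|\!|\!|\nabla w|\!|\!|^2_H\ge0$, while (A3) bounds $|(f_{u_1}-f_{u_2},w)_H|$ by $\alpha\|w\|^2_H+\beta|\!|\!|\nabla w|\!|\!|_H\|w\|_H$; absorbing the gradient contribution by Young's inequality and applying Gronwall's lemma (backward in $t$) forces $\|w(t)\|_H=0$ for every $t$, whence $u_1=u_2$. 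Feeding $u_1=u_2$ back into (\ref{eq4.4}) leaves $\int_{E_{0,T}}\eta\,d\nu^i=0$ for all bounded $\eta\in\WW_0$, and since such $\eta$ form a determining class for measures in $\MM_{0,b}(E_{0,T})$ this gives $\nu=0$, i.e. $\mu_1=\mu_2$.

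The main obstacle is the energy identity itself, since $w$ need not belong to $[\WW]^m$ (its time derivative is only a measure), so one cannot simply insert $\eta=w$ into (\ref{eq3.17}). I would justify it by regularization: approximate $\nu$ by measures with $L^2$-densities, for which $w$ does lie in $[\WW]^m$ and the classical integration-by-parts formula applies, and then pass to the limit, using quasi-continuity of $w$ to make sense of $\int\langle w,d\nu\rangle$ against the smooth measure $\nu$ and the fact that $\nu(\{t\}\times E)=0$ (Remark \ref{rem3.5}(iv)) to rule out boundary terms in time; this is the parabolic analogue of the standard argument recalled after (\ref{eq4.4}). An alternative route, staying entirely within the probabilistic framework, is to show that every variational solution is also a probabilistic solution -- conditions (a),(b) transfer through Proposition \ref{prop2.1} together with the Revuz representation (\ref{eq2.5}) of the smooth measures $\mu^i$, and the analytic minimality (\ref{eq3.18}) upgrades to the pathwise minimality (\ref{eq3.19}) via (\ref{eq2.5}) and the arbitrariness of $h$ -- and then to invoke the already established uniqueness of probabilistic solutions; in this route the delicate point is precisely the passage from (\ref{eq3.18}) to (\ref{eq3.19}).
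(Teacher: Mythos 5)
Your overall skeleton coincides with the paper's: form the difference $w=u_1-u_2$, use the minimality condition (\ref{eq3.18}) symmetrically with $h=u_2$ and $h=u_1$ to get $\int^T_t\!\!\int_E\langle w,d(\mu_1-\mu_2)\rangle\le0$, and close with ellipticity, (A3), Young and Gronwall. That part is correct and is exactly what the paper does. The problem is that essentially the entire content of the paper's proof lies in the step you defer, namely justifying the energy identity (or rather an inequality version of it) when $w$ is only a bounded quasi-continuous element of $L^2(0,T;V)$ whose time derivative involves the measure $\nu$, so that $w$ is not an admissible test function in (\ref{eq3.17}). The paper resolves this by regularizing the \emph{test function}, not the data: it takes $\eta=u_{\alpha}=\alpha\hat R^{0,T}_{\alpha}u$, uses the generator identity $(-\frac{\partial}{\partial s}+L_s)u_{\alpha}=\alpha(u_{\alpha}-u)$ to obtain the one-sided inequality (\ref{eq4.20}) (so no exact identity is ever needed), and then passes to the limit using three facts: strong convergence $u_{\alpha}\to u$ in $L^2(0,T;V)$, pointwise q.e. convergence $u_{\alpha}(s,x)\to u(s,x)$ coming from quasi-continuity via the representation $u_{\alpha}(s,x)=\hat E_{s,x}\,\alpha\int_0^{\hat\zeta\wedge\tau(0)}e^{-\alpha t}u(\hat\BBX_t)\,dt$, and uniform boundedness of $u_{\alpha}$ (from (D1)), which permits dominated convergence against the finite smooth measures $\mu_i$ in (\ref{eq4.23}).

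Your proposed justification --- approximate $\nu$ by measures with $L^2$ densities so that ``$w$ does lie in $[\WW]^m$'' --- has a genuine gap: $\nu$ and $w$ are not independent, so replacing $\nu$ by $\nu_n$ forces you to replace $w$ by the solution $w_n$ of the corresponding Cauchy problem, and the limit you then need is $\int^T_t\!\!\int_E\langle \tilde w_n,d\nu_n\rangle\to\int^T_t\!\!\int_E\langle \tilde w,d\nu\rangle$, where both the quasi-continuous representatives and the measures vary. Since the $\nu_n$ only converge weakly${}^*$ and the $\tilde w_n$ a priori only converge in $L^2(0,T;V)$, this requires quasi-uniform (or at least q.e.\ pointwise, with uniform bounds) convergence of $\tilde w_n$, which is precisely the hard capacitary point and is not addressed; this is why the paper regularizes $u$ by the co-resolvent instead. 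Your alternative probabilistic route suffers from the analogous difficulty in the step you yourself flag: (\ref{eq3.18}) is a spatially integrated statement, and upgrading it to the pathwise $P_{s,x}$-a.s.\ inequality (\ref{eq3.19}) for q.e.\ $(s,x)$ is not a formality (the paper only ever proves the easy implication (\ref{eq3.19})$\Rightarrow$(\ref{eq3.18})). So the strategy is right, but the proof as written is incomplete at its central step.
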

\begin{proof}
Suppose that $(u_1,\mu_1),(u_2,\mu_2)$ are solutions of
OP$(\varphi,f,D)$ and set $u=u_1-u_2$, $\mu=\mu_1-\mu_2$. Then
from (\ref{eq3.17}) it follows that for every bounded
quasi-continuous $\eta\in\WW_0$,
\begin{align}
\label{eq4.17} &(u(t),\eta(t))_H
+\int^T_t\big\langle\frac{\partial\eta}{\partial s},
u(s)\big\rangle_{V',V}\,ds +\int^T_ta^{(s)}(u(s),\eta(s))\,ds\nonumber\\
&\qquad=\int^T_t(f_{u_1}(s)-f_{u_2}(s),\eta(s))_H\,ds
+\int^T_t\!\!\int_E\langle\eta,d(\mu_1-\mu_2)\rangle.
\end{align}
Set $u^i_{\alpha}=\alpha \hat R^{0,T}_{\alpha}u^i$, $i=1,\dots,m$,
where $\hat R^{0,T}_{\alpha}u^i$ is defined by (\ref{eq3.6}) with
$\mu=u^i\,dm_1$. By \cite[(3.17)]{K:JFA}, $\hat
R^{0,T}_{\alpha}u^i$ is an $m_1$-version of $\hat
G^{0,T}_{\alpha}u^i$, where $(\hat G^{0,T}_{\alpha})_{\alpha>0}$
is the co-resolvent associated with the form $\EE^{0,T}$ (see
(\ref{eq2.14})).  Taking
$\eta=u_{\alpha}=(u^1_{\alpha},\dots,u^m_{\alpha})$ as test
function in (\ref{eq4.17}) we obtain
\begin{align}
\label{eq4.3} &(u(t),u_{\alpha}(t))_H
+\int^T_t\big\langle\frac{\partial u_{\alpha}}{\partial
s}(s),u(s)\big\rangle_{V',V}\,ds
+\int^T_ta^{(s)}(u(s),u_{\alpha}(s))\,ds
\nonumber\\
&\qquad=\int^T_t(f_{u_1}(s)-f_{u_2}(s),u_{\alpha}(s))_H\,ds
+\int^T_t\!\!\int_E\langle u_{\alpha},d(\mu_1-\mu_2)\rangle.
\end{align}
The generator $\hat\LL$ of the co-resolvent $(\hat
G^{0,T}_{\alpha})_{\alpha>0}$ has the form   $(\hat\LL
v)(s)=-\frac{\partial v}{\partial s}(s)+L_tv(s)$ for $v\in\WW_0$.
Hence $(\alpha-(-\frac{\partial}{\partial s}+L_s))\hat
R^{0,T}_{\alpha}u=u $, and consequently
\[
(-\frac{\partial}{\partial s}+L_s)u_{\alpha}=\alpha(u_{\alpha}-u).
\]
Therefore
\[
\int^T_t\big\langle\frac{\partial u_{\alpha}}{\partial s}(s),
u(s)\big\rangle_{V',V}\,ds
+\int^T_ta^{(s)}(u(s),u_{\alpha}(s))\,ds
=\int^T_t\alpha(u(s)-u_{\alpha}(s),u(s))\,ds
\]
and
\begin{align*}
 &\frac12\|u_{\alpha}(T)\|^2_H-\frac12\|u_{\alpha}(t)\|^2_H
+\int^T_ta^{(s)}(u_{\alpha}(s),u_{\alpha}(s)(s))\,ds\\
&\qquad=\int^T_t\alpha(u(s)-u_{\alpha}(s),u_{\alpha}(s))\,ds.
\end{align*}
By the last two equalities,
\begin{align}
\label{eq4.20} &\int^T_t\big\langle\frac{\partial
u_{\alpha}}{\partial s}(s), u(s)\big\rangle_{V',V}\,ds
+\int^T_ta^{(s)}(u(s),u_{\alpha}(s))\,ds \nonumber\\
&\qquad=\int^T_t\alpha(u(s)-u_{\alpha}(s),
u(s)-u_{\alpha}(s))\,ds+\int^T_t\alpha(u(s)-u_{\alpha}(s),
u_{\alpha}(s))\,ds\nonumber\\
&\qquad\ge\int^T_ta^{(s)}(u_{\alpha}(s),u_{\alpha}(s))\,ds
+\frac12\|u_{\alpha}(T)\|^2_H-\frac12\|u_{\alpha}(t)\|^2_H.
\end{align}
Since $u\in L^2(0,T;V)$ and it is known that $(\hat
G^{0,T}_{\alpha})$ is strongly continuous (see, e.g.,
\cite[Proposition I.3.7]{S}), $u_{\alpha}\rightarrow u$ in
$L^2(0,T;V)$ as $\alpha\rightarrow\infty$. Hence, for every
$t\in[0,T]$,
\begin{equation}
\label{eq4.21} \int^T_ta^{(s)}(u_{\alpha}(s),u_{\alpha}(s))\,ds
\rightarrow \int^T_ta^{(s)}(u(s),u(s))\,ds.
\end{equation}
Since $u$ is quasi-continuous, $[0,T]\ni t\mapsto u(\hat\BBX_t)$
is continuous $P_{s,x}$-a.s. for q.e. $(s,x)\in E_{0,T}$. Hence
$\alpha\int^{\hat\zeta\wedge\tau(0)}_0e^{-\alpha t}
u(\hat\BBX_t)\,dt=\int_0^{\alpha(\hat\zeta\wedge\tau(0))}
e^{-t}u(\hat\BBX_{t/\alpha})\,dt \rightarrow u(\BBX_0)$
$P_{s,x}$-a.s. as $\alpha\rightarrow\infty$ for q.e. $(t,x)\in
E_{0,T}$. Since $u(t,y)\in D(t,y)$ for q.e. $(t,y)\in E_{0,T}$ and
the sets $D(t,y)$ are uniformly bounded, it follows from this and
the Lebesgue dominated convergence theorem that
$u_{\alpha}(s,x)\rightarrow E_{s,x}u(\BBX_0)=u(s,x)$ for q.e.
$(s,x)\in E_{0,T}$. Hence, by the Lebesgue dominated convergence
theorem again,
\begin{equation}
\label{eq4.22} (u(t),u_{\alpha}(t))_H\rightarrow
\|u(t)\|^2_H,\qquad \|u_{\alpha}(t)\|^2_H\rightarrow \|u(t)\|^2_H
\end{equation}
and
\begin{equation}
\label{eq4.23} \int^T_t\!\!\int_E\langle
u_{\alpha},d(\mu_1-\mu_2)\rangle
\rightarrow\int^T_t\!\!\int_E\langle u,d(\mu_1-\mu_2)\rangle.
\end{equation}
By (\ref{eq3.18}), the right-hand side of (\ref{eq4.23}) is
nonpositive. Therefore letting $\alpha\rightarrow\infty$ in
(\ref{eq4.3}) and using (\ref{eq4.20})--(\ref{eq4.23}) we get
\[
\frac12\|u(t)\|^2_H+\int^T_ta^{(s)}(u(s),u(s))\,ds\le
\int^T_t(f_{u_1}(s)-f_{u_2}(s),u(s))_H\,ds,\quad t\in[0,T].
\]
Using  (A3) and Gronwall's lemma we deduce from the above
inequality that  
$u=0$ a.e.
\end{proof}

\begin{theorem}
Under the assumptions of Theorem \ref{th3.8} there exists a unique
variational solution of \mbox{\rm OP$(\varphi,f,D)$}.
\end{theorem}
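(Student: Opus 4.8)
The plan is to establish existence by verifying that the probabilistic solution $(u,\mu)$ produced by Theorem \ref{th3.8} satisfies the three requirements of Definition \ref{def4.1}; the preceding proposition already supplies uniqueness. Condition (a) comes for free: Theorem \ref{th3.8}(ii) gives quasi-continuity of $u$, the regularity $u^i\in C([0,T];H)\cap L^2(0,T;H^1_0(E))$ (with the terminal value $u(T)=\varphi$ recorded through Remark \ref{rem3.5}(iii)), and $\mu^i\in\MM_{0,b}(E_{0,T})$. Moreover, since $u\in L^2(0,T;V)$, assumptions (A2), (A3) and boundedness of $\sigma$ yield $f^i_u\in L^2(0,T;H)$, so every term in (\ref{eq3.17}) and (\ref{eq4.4}) is meaningful.

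The core of the proof is condition (b): I must upgrade the stochastic representation (\ref{eq3.2}) to the analytic weak identity (\ref{eq4.4}), which by the remark following Definition \ref{def4.1} is equivalent to (\ref{eq3.17}). Since $f_u$ and $\mu$ are now frozen data read off from $(u,\mu)$, this is a \emph{linear} statement, namely that $u^i$ is the weak solution of
\[
\frac{\partial u^i}{\partial t}+L_tu^i=-f^i_u-\mu^i,\qquad u^i(T)=\varphi^i .
\]
I would let $w^i\in\WW_{0,T}$ be the weak solution of this Cauchy--Dirichlet problem, which exists and is unique by the linear theory for divergence-form operators with an $L^2$ source and a smooth finite-variation measure $\mu^i\in\MM_{0,b}(E_{0,T})$. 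The decisive point is that $w^i$ admits a probabilistic representation of exactly the shape (\ref{eq3.2}): Proposition \ref{prop2.1} handles the $L^2$ term $f^i_u$ and the martingale part $\int\sigma\nabla w^i\,dB$, while the Revuz correspondence (\ref{eq2.5}) shows that $\mu^i$ contributes precisely the additive-functional term $\int dA^{\mu^i}$. Matching this against (\ref{eq3.2}) and invoking the identity $Y=u(\BBX)$ of Remark \ref{rem3.5}(ii), together with uniqueness of solutions of the resulting linear Markov-type BSDE, forces $w=u$ q.e., whence $u$ satisfies (\ref{eq4.4}). I expect this identification — the bookkeeping that reads the additive functional $A^{\mu}$ back as the distributional term $\mu$ in the weak formulation via (\ref{eq2.5}) — to be the main obstacle.

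It remains to derive the analytic minimality (\ref{eq3.18}) from its probabilistic form (\ref{eq3.19}); that $u(t,x)\in D(t,x)$ q.e.\ is already part of Definition \ref{def3.4}(c). Fix an admissible quasi-continuous $h$. The minimality (\ref{eq3.19}), valid for all such $h$, encodes the usual reflection structure of $A^{\mu}$, so that $\langle u-h,dA^{\mu}\rangle\le0$ holds as a measure in time; localizing and applying (\ref{eq2.5}) componentwise gives, for q.e.\ starting point, a weighted inequality whose weight is the strictly positive density $p_E$. Integrating in the spatial variable and using strict positivity to transfer the sign — exactly the device used in the closing paragraph of the proof of Theorem \ref{th3.8} to show that $\mu$ charges no set of zero capacity — yields that $\langle u-h,d\mu\rangle$ is a nonpositive measure on $E_{0,T}$, which is (\ref{eq3.18}) for every $t\in(0,T)$. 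Having checked (a), (b) and (c), the pair $(u,\mu)$ is a variational solution, and uniqueness from the preceding proposition completes the proof.
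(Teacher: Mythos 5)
Your treatment of conditions (a) and (c) coincides with the paper's: (a) follows from Theorem \ref{th3.8}(ii) and Remark \ref{rem3.5}(iii), and (c) is obtained exactly as in the paper, by writing (\ref{eq3.19}) through the correspondence (\ref{eq2.5}) as an inequality against the kernel $p_E(s,x,\cdot,\cdot)$, integrating in $x$, and using strict positivity of $\varrho(s;\cdot,\cdot)$. The problem is your route to condition (b). You propose to freeze $f_u$ and $\mu$, solve the linear Cauchy problem $\frac{\partial w}{\partial t}+L_tw=-f_u-\mu$, $w(T)=\varphi$, in $\WW_{0,T}$, give $w$ a probabilistic representation of the form (\ref{eq3.2}), and identify $w$ with $u$ by uniqueness of the linear BSDE. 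This hinges on precisely the ingredient the paper never supplies and which you yourself flag as ``the main obstacle'': a solvability-and-representation theorem for linear parabolic equations whose right-hand side contains a general measure $\mu^i\in\MM_{0,b}(E_{0,T})$. Proposition \ref{prop2.1} covers only $L^2$ sources; extending it to read the additive functional $A^{\mu}$ back as the distributional term $\mu$ is a nontrivial piece of potential theory (it is developed in \cite{K:JFA,KR:JEE}, not here), and your assertion that $w^i\in\WW_{0,T}$ is simply false for a generic smooth bounded measure --- it holds only when $\mu^i$ has finite energy, which in the present situation you would be extracting from the very identity $w=u$ you are trying to prove. As written, the argument for (b) is therefore circular, or at least rests on an unproved external theorem.

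The paper avoids all of this by never posing a linear problem with measure data. It returns to the penalized solutions $u_n$ of (\ref{eq3.4}), for which the weak formulation holds with the absolutely continuous measures $\mu_n$ (so only the $L^2$ theory is needed), and passes to the limit in that formulation for test functions $\eta$ with components in $C_c(E_{0,T})$, using the convergences already secured in Theorem \ref{th3.8}(iii): the strong convergences (\ref{eq4.12}) and (\ref{eq3.21}) handle the left-hand side and the nonlinear term via (A3), while the weak${}^*$ convergence (\ref{eq4.8}) carries $\int\eta\,d\mu_n$ to $\int\eta\,d\mu$; a density argument then upgrades the identity to all bounded $\eta\in\WW_0$, which is (\ref{eq3.17}). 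If you want to salvage your approach, you would need to first prove (or cite) the representation theorem for linear equations with smooth measure data; otherwise the limiting argument through the penalization is the economical path.
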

\begin{proof}
We need only prove the existence of a solution. By Theorem
\ref{th3.8} there exists a probabilistic solution $(u,\mu)$ of
OP$(\varphi,f,D)$. By (\ref{eq3.15}) and Remark
\ref{rem3.5})(iii), $u\in C([0,T];H)$, and hence $(u,\mu)$
satisfies condition (a) of Definition \ref{def4.1}.  To see that
(b) and (c) are satisfied, let us consider solutions $u_n$ of
(\ref{eq3.4}). Then for every $\eta=(\eta^1,\dots,\eta^m)$ such
that $\eta_i\in C_c(E_{0,T})$ for $i=1,\dots,m$ we have
\begin{align*}
&(u_n(t),\eta(t))_H
+\int^T_t\big\langle\frac{\partial\eta}{\partial s}(s),
u_n(s)\big\rangle_{V',V}\,ds
+\int^T_ta^{(s)}(u_n(s),\eta(s))\,ds\nonumber \\
&\qquad=(\varphi,\eta(T))_H +\int^T_t(f_{u_n}(s),\eta(s))_H\,ds
+\int^T_t\!\!\int_E\eta\,d\mu_n,
\end{align*}
Letting $n\rightarrow\infty$ in the above equality and using
(\ref{eq4.12}), (\ref{eq4.8}), (\ref{eq3.21}) and (A3) we conclude
that (\ref{eq3.17}) is satisfied for $\eta$ as above, and hence
for $\eta\in\WW_0$ since  $C_c(E_{0,T})$ is dense is $\WW_0$.
Moreover, by (\ref{eq2.5}) and (\ref{eq3.19}), for every
quasi-continuous $h$ such that $h(t,x)\in D(t,x)$ for q.e.
$(t,x)\in E_{0,T}$ we have
\[
\sum^m_{i=1}\int^T_s\!\!\int_E(u^i-h^i)(t,y)p_E(s,x,t,y)\,d\mu^i(t,y)
=E_{s,x}\int^{\zeta_{\tau}}_0\langle
u(\BBX_t)-h(\BBX_t),dA^{\mu}_t\rangle\le0
\]
for q.e. $(s,x)\in E_{0,T}$. Integrating with respect to $x$ and
using Remark \ref{rem2.1}(i) we obtain
\[
\sum^m_{i=1}\int^T_s\!\!\int_E(u^i-h^i)(t,y)
\varrho(s;t,y)\,d\mu^i(t,y)\le0
\]
for every $s\in(0,T]$, which implies (\ref{eq3.18}), because
$\varrho(s;\cdot,\cdot)$ is strictly positive.
\end{proof}

\begin{remark}
Let the assumptions of Theorem \ref{th3.8} hold. Then the first
component $u$ of the solution of OP$(\varphi,f,D)$ satisfies the
following parabolic variational inequality: for every $v\in\WW$
such that $v(t,x)\in D(t,x)$ for a.e. $(t,x)\in E_T$,
\begin{align}
\label{eq3.20} &\int^T_0\big\langle\frac{\partial v}{\partial
t}(t),v(t)-u(t)\big\rangle_{V',V}\,dt
-\int^T_0a^{(t)}(u(t),v(t)-u(t))\,dt\nonumber\\
&\qquad +\int^T_0(f_u(t),v(t)-u(t))_H\,dt
\le\frac12\|v(T)-\varphi\|^2_H.
\end{align}
To see this,  let us consider the solution $u_n$  of
(\ref{eq3.4}). Taking $v-u_n$ as test function in (\ref{eq3.4}) we
get
\begin{align}
\label{eq4.10} &\int^T_0\big\langle\frac{\partial u_n}{\partial
t},v(t)-u_n(t)\big\rangle_{V',V}\,dt
-\int^T_ta^{(t)}(u_n(t),v(t)-u_n(t))\,dt\nonumber\\
&\qquad=-\int^T_0(f_{u_n}(t),v(t)-u_n(t))_H\,dt
-\int_{E_{0,T}}\langle v-u_n,\,d\mu_n\rangle.
\end{align}
Since
\begin{align*}
\int^T_0\big\langle\frac{\partial u_n}{\partial
t}(t),v(t)-u_n(t)\big\rangle_{V',V}\,dt
&=\int^T_0\big\langle\frac{\partial v}{\partial
t}(t),v(t)-u_n(t)\big\rangle_{V',V}\,dt\\
&\quad-\frac12\|v(T)-\varphi\|^2_H+\frac12\|v(0)-u_n(0)\|^2_H,
\end{align*}
it follows from (\ref{eq4.10}) that
\begin{align*}
&\int^T_0\big\langle\frac{\partial v}{\partial
t}(t),v(t)-u_n(t)\big\rangle_{V',V}\,dt
-\int^T_0a^{(t)}(u_n(t),v(t)-u_n(t))\,dt\\
&\qquad\quad+\int^T_0(f_{u_n}(t),v(t)-u_n(t))_H\,dt\\
&\qquad=\frac12\|v(T)-\varphi\|^2_H-\frac12\|v(0)-u(0)\|^2_H
-\int_{E_{0,T}}\langle v-u_n,\,d\mu_n\rangle\le
\frac12\|v(T)-\varphi\|^2_H,
\end{align*}
the last inequality being a consequence of (\ref{eq2.6}). Letting
$n\rightarrow\infty$ and using (\ref{eq4.12}), (\ref{eq4.8}) and
(A3) we get (\ref{eq3.20}).
\end{remark}

We close this section with stating some open problems. In the
whole paper we assume that $E$ is bounded. Therefore our results
do not apply to single equations with one obstacle or, for
instance,  to systems of equations with obstacles of the form
\begin{equation}
\label{eq4.18} D(t,x)=\{y\in\BR^m:y_i\ge h^i(t,x)\}\quad
\mbox{or}\quad D(t,x)=\{y\in\BR^m:y_i\le h^i(t,x)\}.
\end{equation}
It would be desirable to extend our results to the setting which
cover these examples. By the way, note that in the paper
\cite{KRS}, on which we rely, $E$ need not be bounded (see
condition (H4) formulated in the proof of Theorem \ref{th3.8}).
Another  natural problem is to generalize the results of the
present paper to systems involving more general operators, for
instance nonsymmetric operators of the form
$L^b_t=L_t+\sum^d_{i=1}b_i(x)\frac{\partial}{\partial x_i}$ with
bounded measurable $b:[0,T]\times E\rightarrow\BR^d$. It is worth
noting here that in \cite[Section 1.2]{MP} the existence of
solutions of variational inequalities with obstacles of the form
(\ref{eq4.18}) and operator $L^b_t$ is proved in the special case
where $f=(f^1,\dots,f^m)$ and
$f^i(t,x,y)=\sum^m_{j=1}c_{ij}(t,x)y_j$ with bounded measurable
$c_{ij}$ such that $c_{ij}\ge0$ a.e. for $i\neq j$ (in fact, in
\cite{MP}, the principal part and the first order part of the
operator need not be the same in each  equation of the system).
Still another problem of interest is to generalize the results of
the paper to irregular obstacles and/or $L^1$ data (for one
dimensional results in this direction see
\cite{K:SPA,K:PA,KR:JEE}).

\smallskip
\noindent{\bf Acknowledgements}
\smallskip\\
This research was supported by  NCN Grant No. 2012/07/B/ST1/03508.


\begin{thebibliography}{22}

\bibitem{A} Aronson, D.G.: Non--Negative Solutions of
Linear Parabolic Equations. {\em Ann. Scuola Norm. Sup. Pisa} {\bf
22}, 607--693 (1968)

\bibitem{BL}
Bensoussan, A., Lions, J.-L.: Applications of Variational
Inequalities in Stochastic Control. North-Holland, Amsterdam
(1982)

\bibitem{BG}
R.M. Blumenthal, R.M., Getoor, R,K.: Markov Processes and
Potential Theory. Academic Press, New York and London (1968)

\bibitem{C}
Chen, Q.: Optimal obstacle control problem for semilinear
evolutionary bilateral variational inequalities. {\em J. Math.
Anal. Appl.} {\bf 307}, 677--690 (2005)

\bibitem{DL}
Duvaut, G., Lions, J.-L.: Inequalities in Mechannics and Physics.
Springer-Verlag, Berlin Heidelberg (1976)

\bibitem{FOT}
Fukushima, M., Oshima, Y., Takeda, M.: Dirichlet Forms and
Symmetric Markov Processes.  Second revised and extended edition.
Walter de Gruyter, Berlin (2011)

\bibitem{GP}
Gegout--Petit, A., Pardoux, \'E.: Equations diff\'erentielles
stochastiques r\'etrogrades r\'efl\'e\-chies dans un convexe, {\em
Stochastics Stochastics Rep.} {\bf 57}, 111--128 (1996)

\bibitem{K:SPA}
Klimsiak, T.: Reflected BSDEs and the obstacle problem for
semilinear PDEs in divergence form. {\em Stochastic Process.
Appl.} {\bf 122}, 134--169 (2012)

\bibitem{K:PA}
Klimsiak, T.: Cauchy problem for semilinear parabolic equation
with time-dependent obstacles: a BSDEs approach. {\em Potential
Anal.} {\bf 39}, 99--140 (2013)

\bibitem{K:JFA}
Klimsiak, T.: Semi-Dirichlet forms, Feynman-Kac functionals and
the Cauchy problem for semilinear parabolic equations, {\em J.
Funct. Anal.} {\bf 268}, 1205--1240 (2015)

\bibitem{KR:JEE}
Klimsiak, T.: Rozkosz, A.: Obstacle problem for semilinear
parabolic equations with measure data, {\em J. Evol. Equ.} {\bf
15}, 457--491 (2015)

\bibitem{KRS}
Klimsiak, T., Rozkosz, A., S\l omi\'nski, L.: Reflected BSDEs in
time-dependent convex regions. {\em Stochastic Process. Appl.}
{\bf 125}, 571--596 (2015)

\bibitem{K}
Kubo, M.: Variational inequalities with time-dependent constraints
in $L^p$. {\em Nonlinear Anal.} {\bf 73}, 390--398 (2010)

\bibitem{KSY}
Kubo, M., Shirakawa, K., Yamazaki, N.: Variational inequalities
for a system of elliptic-parabolic equations. {\em J. Math. Anal.
Appl.} {\bf 387}, 490--511 (2012)

\bibitem{KY}
Kubo, M., Yamazaki, N.: Periodic solutions of elliptic-parabolic
variational inequalities with time-dependent constraints. {\em J.
Evol. Equ.} {\bf 6}, 71--93 (2006)

\bibitem{LSU}
Lady\v{z}enskaja, O.A., Solonnikov, V.A., Ural'ceva, N.N.: Linear
and quasi--linear equations of parabolic type. Transl. Math.
Monographs, Vol. 23, Amer. Math. Soc., Providence, R.I. (1968)

\bibitem{L1}
Lions, J.-L.: Contr\^ole optimal de syst\`emes gouvern\'es par des
\'equations aux d\'eriv\'ees partielles. Dunod, Paris (1968)

\bibitem{L2}
Lions, J.-L.: Quelques m\'ethodes de r\'esolution des probl\`emes
aux limites non lin\'eares. Dunod, Paris (1969)

\bibitem{M}
Menaldi, J.L.: Stochastic variational inequality for reflected
diffusion. {\em Indiana Univ. Math. J.} {\bf 32}, 733--744 (1983)

\bibitem{MP}
Mignot, F., Puel, J.P.: In\'equations d'Evolution Paraboliques
avec Convexes d\'ependant du Temps. Applications aux In\'equations
Quasi-Variationnelles d'Evolution. {\em Arch. Rational Mech.
Anal.} {\bf 64}, 59--91 (1997)

\bibitem{MM}
Monteiro Marques, M.: Differential inclusions in nonsmooth
mechanical problems. Shocks and dry friction. Birkh\"auser, Basel
(1993)

\bibitem{NO}
Nystr\"om, K., Olofsson, M.: Reflected BSDE of Wiener-Poisson type
in time-dependent domains. {\em Stoch. Models} {\bf 32}, 275--300
(2016)

\bibitem{O1}
Oshima, Y.: Time-dependent Dirichlet forms and related stochastic
calculus. {\em Infin. Dimens. Anal. Quantum Probab. Relat. Top.}
{\bf 7}, 281--316 (2004)

\bibitem{O2}
Oshima, Y.: Semi-Dirichlet Forms and Markov Processes. Walter de
Gruyter, Berlin (2013)

\bibitem{Ou}
Ouknine, Y.: Reflected backward stochastic differential equations
with jumps. {\em Stochastics Stochastics Rep.} {\bf 65} 111--125
(1998)

\bibitem{P}
Pierre, M.: Problemes d'Evolution avec Contraintes Unilat\'erales
et Potentiel Paraboliques.  {\em Comm. Partial Differential
Equations} {\bf 4}, 1149--1197 (1979)

\bibitem{Pr}
Protter, P: {\em Stochastic Integration and Differential
Equations}. Second Edition. Springer, Berlin 2004.

\bibitem{S}
Stannat, S.: The Theory of Generalized Dirichlet Forms and Its
Applications in Analysis and Stochastics, {\em Mem. Amer. Math.
Soc.} {\bf 142} (1999), no. 678, viii+101 pp.

\bibitem{T}
Trutnau, G.: Stochastic calculus of generalized Dirichlet forms
and applications to stochastic differential equations in infinite
dimensions. {\em Osaka J. Math.} {\bf 37}, 315--343 (2000)

\end{thebibliography}
\end{document}